\documentclass[12pt]{amsart}

\numberwithin{equation}{section}

\usepackage{amsmath,amsthm,amsfonts,eucal,}
\usepackage{xypic}
\usepackage{graphicx}
\usepackage{amssymb}
\usepackage{bbm}

\hfuzz12pt \vfuzz12pt


\def\cb{{\mathcal B}}

\def\ce{{\mathcal E}}

\def\ch{{\mathcal H}}

\def\cs{{\mathcal S}}

\def\cu{{\mathcal U}}


\def\ga{{\mathfrak A}} 
\def\gb{{\mathfrak B}}


\def\bc{{\mathbb C}}

\def\bn{{\mathbb N}}
\def\bo{{\mathbb O}}
\def\bp{{\mathbb P}}
\def\br{{\mathbb R}}
\def\bt{{\mathbb T}}

\def\bz{{\mathbb Z}}

\def\a{\alpha}

\def\r{\rho}
\def\s{\sigma}

\def\om{\omega} \def\Om{\Omega}

\newtheorem{thm}{Theorem}[section]
\newtheorem{example}[thm]{Example}
\newtheorem{lem}[thm]{Lemma}
\newtheorem{cor}[thm]{Corollary}
\newtheorem{prop}[thm]{Proposition}

\theoremstyle{definition}
\newtheorem{rem}[thm]{Remark}

\def\Im{\mathop{\rm Im}}

\begin{document}
\title[Freedman's theorem for unitarily invariant states on the CCR algebra]
{Freedman's theorem for unitarily invariant states on the CCR algebra}

\author{Vitonofrio Crismale}
\address{Vitonofrio Crismale\\
Dipartimento di Matematica\\
Universit\`{a} degli studi di Bari\\
Via E. Orabona, 4, 70125 Bari, Italy}
\email{\texttt{vitonofrio.crismale@uniba.it}}

\author{Simone Del Vecchio}
\address{Simone Del Vecchio\\
Dipartimento di Matematica\\
Universit\`{a} degli studi di Bari\\
Via E. Orabona, 4, 70125 Bari, Italy}
\email{\texttt{simone.delvecchio@uniba.it}}

\author{Tommaso Monni}
\address{Tommaso Monni\\
Dipartimento di Matematica\\
Universit\`{a} degli studi di Bari\\
Via E. Orabona, 4, 70125 Bari, Italy}
\email{\texttt{tommaso.monni@uniba.it}}

\author{Stefano Rossi}
\address{Stefano Rossi\\
Dipartimento di Matematica\\
Universit\`{a} degli studi di Bari\\
Via E. Orabona, 4, 70125 Bari, Italy}
\email{\texttt{stefano.rossi@uniba.it}}

\begin{abstract}

The set of states on ${\rm CCR}(\ch)$, the CCR algebra
of a separable Hilbert space $\ch$, is here looked at as a natural object
to obtain a non-commutative version of Freedman's theorem for unitarily invariant stochastic processes.
In this regard, we provide a complete description of the compact convex set of  states of
${\rm CCR}(\ch)$ that are invariant under the action of all automorphisms induced in second quantization by
unitaries of $\ch$. We prove that this set is a Bauer simplex, whose extreme states are either
the canonical trace of the CCR algebra or Gaussian states  with variance at least $1$.


\vskip0.1cm\noindent \\
{\bf Mathematics Subject Classification}:  60G09, 46L53, 81V73, 45A55\\
{\bf Key words}: CCR algebras, invariant states, Freedman's theorem, regular states
\end{abstract}

\maketitle
\section{Introduction}

A sequence of real-valued random variables
is said to be orthogonally invariant (or rotatable, as in \cite{Ka}) if the joint
distribution of any finite subset of variables taken from the sequence
has spherical symmetry. Rotatability is among the strongest distributional symmetries that a stochastic process
may enjoy.
Indeed, it is a classical result due to Freedman \cite{Freedman} that the joint distribution
of an orthogonally invariant sequence is a mixture ({\it i.e.} convex combination) of measures each of which
is an infinite product of a single centered Gaussian distribution. Furthermore,
extreme measures correspond to sequences of independent, identically distributed variables
with centered Gaussian distribution with arbitrary variance.
A weaker distributional symmetry, exchangeability only requires invariance of the finite-dimensional
distributions under permutations. Exchangeable sequences are ruled by de Finetti's theorem, which says
that their joint distribution is still a mixture, and that  extreme measures are again infinite products. The
common distribution, however, remains undetermined, see {\it e.g.} \cite{Ka}.
Unlike the classical framework, in most of the commonly used
non-commutative models  already the structure of the convex of exchangeable (also known as symmetric) states is too
poor to allow for a strict inclusion of the convex of orthogonally invariant states in the set of symmetric states.
For instance, the only symmetric state on the so-called $q$-deformed $C^*$-algebra with $|q|<1$ ($q=0$ corresponds to the free case) is the Fock vacuum, see
{\it e.g.} \cite{CRAMPA}, whereas symmetric states on both Boolean and monotone $C^*$-algebra make up a mere segment, see
\cite{CrFid, CFG}. The case of  the CAR algebra  is the odd one out: although it does feature a rich convex set of symmetric states, thoroughly analyzed  in \cite{CFCMP}, its
rotatable states have nonetheless been proved to be the same
as symmetric states in \cite{CDR}.
Far-reaching general results can be stated all the same, at the cost, though, of
working with symmetries implemented by the natural coaction of the quantum orthogonal/unitary groups on the $*$-algebra of
non-commutative polynomials. Doing so, the Gaussian distribution must be replaced by free operator-valued circular distribution,
see \cite{Curran}.\\
The motivation of the present paper, however, is to exhibit a natural model where rotatable states are  fewer than
symmetric states and may be described in a meaningful way.
The CCR algebra associated with a separable complex Hilbert space $\ch$, ${\rm CCR}(\ch)$,
turns out to provide a model  particularly suited to this end for three main reasons.
First, it is naturally acted upon by unitary transformations and permutations, which puts one in a position to consider unitarily invariant and symmetric states. Second, it contains a subalgebra $\gb_0$, which is still a CCR algebra, and, more importantly,
is an infinite tensor product to which classical results of Størmer in \cite{StorJFA69} apply, hence
 making the structure of exchangeable states
completely known. Third, it also has the merit of providing a setting where
a genuine generalization of Freedman's classical theorem may be stated, see Remark \ref{classical}.\\
It is worth remarking, however, that the actual quantum random variables associated with the CCR algebra are  in fact
given by a sequence of pairs 
$\{(Q_i, P_i): i\in\bz\}$ of non-commuting self-adjoint operators, each of which should be conceived as a complex variable.
Therefore, unitary invariance appears the natural symmetry to take into account
as opposed to orthogonal invariance.\\
With this in mind, we start our analysis  by considering
our subalgebra $\gb_0\subset {\rm CCR}(\ch)$  generated by Weyl operators $W(x)$, with $x$ running through
the  dense subspace $\ch_0$ of the Hilbert space $\ch$  of those vectors with only finitely many  Fourier coefficients different from $0$ with respect to a given basis. Working with $\gb_0$  is particularly convenient because
of its factoring into an infinite tensor product of ${\rm CCR}(\bc)$ with itself,
as well as being
invariant under the action of  both $\bo_\bz$, the group of localized orthogonal transformations, and
$\mathbb{U}_\bz$, the group of localized unitary transformations. As a benefit of the tensor structure of $\gb_0$, the restricted dynamics is asymptotically abelian, from which we derive a number
of consequences. In particular, both the sets of $\bo_\bz$-invariant states and of $\mathbb{U}_\bz$-invariant states on $\gb_0$ are  Choquet simplices, and their extreme states are infinite products of a single state $\varphi$ on ${\rm CCR}(\bc)$, Proposition \ref{extreme}, in analogy
with what happens in the classical setting.
Moreover, knowing what the extreme states of our simplices look like proves key to
showing that each of our simplices of invariant states is contained in the larger ones
as a face, Proposition \ref{faces}.\\
The characteristic function of the state $\varphi$ above, that is $\bc\ni z\mapsto \varphi(W(z))\in\bc$, must satisfy a certain functional equation, the solutions of which
are Gaussians of the type $\varphi(W(z))=e^{-\sigma^2(\arg z) |z|^2}$, where the variance $\sigma^2$ only depends on the phase of $z$, and is also allowed to take  possibly infinite values, Lemma \ref{allsolutions}.
As long as rotatable states are focused on, the variance may and will vary with the phase, as shown in Example \ref{rota}.
 As soon as one requires unitary invariance, though, the dependence of $\sigma^2$ on the phase disappears. More precisely, there are only two possibilities. In the first, the characteristic function is
 $\varphi(W(z))=\mathbbm{1} _{\{0\}}(z)$, the indicator function of the singleton $\{0\}$.
In the second,  the characteristic function is Gaussian, namely $\varphi(W(z))=e^{-\frac{1}{2}\sigma^2 |z|^2}$, where $\sigma^2$ cannot be chosen as small as one wishes. In fact, Heisenberg's uncertainty principle drives
$\sigma^2$ to be at least $1$, Proposition \ref{unitary}.
Phrased differently, all regular extreme $\mathbb{U}_\bz$-invariant states on $\gb_0$ are quasi-free states of the type $\om_{\s^2}(W(x))= e^{-\frac{\s^2}{2}\|x\|^2}$, $x\in\ch_0$, with
$\s^2\geq 1$. In addition, taking $\sigma^2$ as small as possible, {\it i.e. }$\sigma^2=1$, yields the only pure $\mathbb{U}_\bz$-invariant state, which is nothing but the Fock vacuum.
Along the way, we also show that our techniques lead us to a new proof of Freedman's theorem based on non-commutative ergodic theory for $C^*$-algebras, Remark \ref{classical}, by restricting the dynamics to a suitable commutative
subalgebra, which is the infinite tensor product of ${\rm CCR}(\br)$ with itself.
In addition, states on the whole $\gb_0$ lend themselves to being interpreted as quantum stochastic processes with sample algebra given by
${\rm CCR}(\bc)$, the CCR algebra of a single particle.\\
Once $\mathbb{U}_\bz$-invariant states on $\gb_0$ have been dealt with, we draw our attention to
the whole ${\rm CCR}(\ch)$. Now $\mathbb{U}_\bz$-invariance is too weak a symmetry to determine
the structure of the resulting states, not least because the map that restricts  $\mathbb{U}_\bz$-invariant
states of ${\rm CCR}(\ch)$ to $\gb_0$ fails to be injective, as a drawback of the existence of the so-called
flat extensions, Remark \ref{flat}. By considering the larger symmetry group $\mathcal{U}(\ch)$ of all unitaries
on $\ch$, though, we manage to come to a complete description of the compact convex set of
the corresponding invariant states, Theorem \ref{Choquet1}. This turns out to be a Bauer simplex
(a Choquet simplex whose extreme points make up a compact set) whose extreme
points are either quasi-free Gaussian states of the type $\om_{\s^2}(W(x))=e^{-\frac{\s^2 \|x\|^2}{2}}$, $x\in\ch$, with $\s^2\geq 1$,
or the canonical trace $\tau$ of  ${\rm CCR}(\ch)$ defined by $\tau(W(x))=0$ for all non-zero $x$ in $\ch$.
In particular, the canonical trace is thus seen to be the only extreme $\mathcal{U}(\ch)$-invariant state on
${\rm CCR}(\ch)$ which is not regular.

\section{Preliminaries}\label{prel}

Let $(\ch,\sigma)$  be a symplectic real space, that is a pair where $\ch$ is a real vector space and
$\sigma:\ch\times\ch\rightarrow\br$ an antisymmetric bilinear form.
We recall that associated with any such pair there is a CCR algebra, denoted by ${\rm CCR}(\ch,\sigma)$,
which is by definition the universal $C^*$-algebra generated by the family of elements
$\{W(x)\,|\,x\in \ch\}$ satisfying the so-called canonical commutation rules in the Weyl form

\begin{equation}\label{CCRWeyl}
\begin{aligned}
	&W(x)^*=W(-x)\\
	&W(x)W(y)=e^{i\sigma(x,y)} W(x+y)
\end{aligned}
\end{equation}
for all $x, y\in\ch$.
Details on how this $C^*$-algebra may be shown to exist are found in {\it e.g.}\cite{Petz}.
In particular, ${\rm CCR}(\ch,\sigma)$ is a unital $C^*$-algebra with unit $W(0)$, and
$W(x)$ is a unitary element for every $x\in\ch$.\\
If we in addition suppose that $\s$ is non-degenerate ($x\in\ch$ with
$\s(x, y)=0$ for all $y\in\ch$ implies $x=0$), then ${\rm CCR}(\ch,\sigma)$ turns out to be
a simple $C^*$-algebra, see {\it e.g.}  \cite{Petz}.\\
Throughout the present paper we will be working with $\ch$ being given by
the real vector space associated with a (separable) complex Hilbert space
$\ch$, thought of as a symplectic space with canonical symplectic form
$$\s(x, y):=\Im \langle x, y\rangle\,\,\, \textrm{for all}\,\, x, y\in\ch,$$ where
$\langle\cdot, \cdot\rangle$ denotes the inner product of $\ch$ (assumed linear in the first variable and anti-linear in the second). As is well known, ${\rm CCR}(\ch, \s)$ admits a (faithful) remarkable representation, the Fock representation, whose construction
we briefly sketch for the reader's convenience.
For each integer $n\geq 0$, we denote by
$\ch^{\otimes^n}$ the tensor product of $\ch$ with itself $n$ times, with the convention that
$\ch^{\otimes^0}$ is a one-dimensional space, which we denote by $\bc\Om$.
The full Fock space is then defined as the infinite direct sum below
$$\mathfrak{F}(\ch):=\bigoplus_{n=0}^\infty\ch^{\otimes^n}\,,$$
and $\Om$ is referred to as the vacuum vector in the literature.\\
We next consider the orthogonal projection $P_+$ of $\mathfrak{F}(\ch)$ uniquely determined by
$$P_+\,x_{1}\otimes\cdots\otimes x_{n}:= \frac{1}{n!}\sum_{s \in S_n} x_{s(1)}\otimes\cdots\otimes x_{s(n)}\,,$$
with $S_n$ the group of all permutations of $\{1, \ldots, n\}$.
The symmetric (or Bose) Fock space is then defined as the range of the projection $P_+$, that is
$$\mathfrak{F}_+(\ch):=P_+(\mathfrak{F}(\ch))\, .$$
With any $x\in\ch$ it is possible to associate a vector $e(x)$ in $\mathfrak{F}_+(\ch)$, known as the exponential vector of $x$, defined as
$$e(x):=\bigoplus_{n=0}^\infty \frac{1}{n!} x^{\otimes^n}\,.$$
We continue our quick overview of the Bose Fock space by recalling that, for any given $A\in\cb(\ch)$, it is possible
to define a (possibly unbounded) operator $\Gamma(A)$ on $\mathfrak{F}_+(\ch)$, the second quantization of $A$, whose action on simple tensors is
$$\Gamma(A) (x_1\otimes\cdots\otimes x_n):= Ax_1\otimes\cdots \otimes Ax_n\,,  \quad n\in\bn,\, x_1, \ldots, x_n\in\ch\,.$$
Accordingly, $\Gamma(A)$ sends exponential vectors to exponential vectors, for
$$\Gamma(A) e(x)= e(Ax)\,,\quad x\in\ch\,. $$
Furthermore, one has $\Gamma(A^*)=\Gamma(A)^*$ and  $\Gamma(AB)=\Gamma(A)\Gamma(B)$ for all $A, B\in\cb(\ch)$, hence
$\Gamma(U)$ is unitary if $U$ is so.\\
For every $x\in\ch$, define on $\mathfrak{F}(\ch)$ a pair of unbounded densely-defined operators  $a(x)$ and $a^\dag(x)$, known as full
annihilator and creator, respectively. For all $n\in\bn$ and $x_1, \ldots, x_n$ in $\ch$, set

\begin{align*}
&a(x)\Om=0\\
&a(x) (x_{1}\otimes\cdots\otimes x_{n}):=n^{\frac{1}{2}}\langle x_1, x \rangle x_{2}\otimes\cdots\otimes x_{n}
\end{align*}
and
\begin{align*}
&a^\dag(x)\Om=x\\
&a^\dag(x) (x_{1}\otimes\cdots\otimes x_{n})=(n+1)^{\frac{1}{2}} x\otimes x_{1}\otimes\cdots\otimes x_{n}\, .
\end{align*}
At this point, for every $x\in\ch$ Bose annihilators and creators can be simply defined as
\begin{align*}
&a_+(x):= P_+ a(x) P_+\\
&a^\dagger_+(x):= P_+ a^\dagger (x) P_+\,.
\end{align*}
Details on the domains of Bose annihilators and creators are to be found in {\it e.g.} \cite{BR2}.
For every $x\in\ch $, the sum $a_+(x)+a^\dagger_+(x)$ turns out to be an essentially self-adjoint operator,
whose unique self-adjoint extension is denoted by $\Phi(x)$, which is called the field operator.
By exponentiating the field operators, we finally arrive at the following family of unitary operators
$$W'(x):=e^{i\Phi(x)}\,, \quad x\in\ch$$
which can be seen to satisfy the CCR relations \eqref{CCRWeyl}.\\
Therefore, by universality of the CCR algebra there exists a unique $*$-representation of
${\rm CCR}(\ch, \s)$ that sends $W(x)$ to $W'(x)$, for all $x\in\ch$. This is precisely
the Fock representation of ${\rm CCR}(\ch, \s)$ and is known to be irreducible.
The vector state associated with $\Om$, the vacuum vector, is thus a pure state, known as the Fock vacuum, which  is
still denoted with $\Om$ by a slight abuse of notation.
Note that $\Om(W(x))=e^{-\frac{1}{2}\|x\|^2} $.\\
The Fock representation is the most important example of a regular representation of ${\rm CCR}(\ch, \s)$, by which we mean a representation
$\pi$ such that for every $x\in \ch$  the one-parameter unitary group  $\{\pi(W(tx)): t\in\br\}$
is strongly continuous. A state $\om$ on ${\rm CCR}(\ch, \s)$ is accordingly said to be regular if its
GNS representation is regular.\\

Our next goal is to define quite a vast class of endomorphisms (automorphisms) on CCR algebras.
To this end, let  $O$ be any real operator on $\ch$
preserving $\s$, that is $\sigma(Ox, Oy)= \sigma(x, y)$ for all $x, y$ in $\ch$.
Then it is immediate to see that the set of unitaries $\{W(Ox)\,|\,x\in \ch)\}$ continues to satisfy
the canonical commutation rules. Therefore, by universality of ${\rm CCR}$ algebras
 there must exist a  $*$-monomorphism $\rho_O$ of ${\rm CCR}(\ch,\sigma)$ uniquely determined
by
\begin{equation}\label{action}
\rho_O(W(x))=W(Ox)\,, \quad x\in\ch\,.
\end{equation}
Since the symplectic form we are working with is the imaginary part of the canonical Hermitian product
of $\ch$, for any isometry $S$ of $\ch$ there exists a $*$-monomorphism
$\rho_S$ of ${\rm CCR}(\ch,\sigma)$ such that
$$\rho_S(W(x))=W(Sx)\,, \quad x\in\ch\,.$$
Furthermore, $\rho_{S_1S_2}=\rho_{S_1}\circ \rho_{S_2}$ for all isometries
$S_1, S_2$ acting on $\ch$. We denote by $\cu(\ch)$ the group of all unitary
operators acting on $\ch$. Clearly, if $U$ is in  $\cu(\ch)$, then $\rho_U$ is an automorphism. \\
In this paper, however,  we will be mostly concerned with isometries and unitaries of a particular form. To define them, we first need
to fix an orthonormal basis, say $\{e_k: k\in\bz\}$, once and for all.
For any
injective map $\Psi$ of $\bz$ to itself, consider the isometry $S_\Psi$ acting on the basis as
$$S_\Psi e_i:=e_{\Psi(i)}\,,\quad i\in\bz\,.$$
By a slight abuse of notation we simply denote by $\rho_\Psi$ the  endomorphism
of ${\rm CCR}(\ch,\sigma)$ corresponding to $S_\Psi$.  Note that
$$\rho_\Psi(W(e_i))=W(e_{\Psi(i)})\,, \quad i \in\bz\,.$$
In particular, corresponding to $\Psi(i)=i+1$, $i\in\bz$, we get the so-called
shift automorphism.\\
We  next establish some notation to introduce the group actions we are interested in.
Let $\bp_\bz$ denote the group of all finite permutations of $\bz$, that  is of all bijective maps
$\s$ of $\bz$ into itself such that $\sigma(i)\neq i$ for finitely many $i$ in $\bz$.
By $\mathbb{U}_\bz$ we denote the subgroup of $\cu(\ch)$ made up of all unitaries
$U$ for which there exists $N>0$ (depending on $U$) such that $Ue_i=e_i$ for all $|i|> N$.
If we denote by $U_{i, j}$ the entries of a given $U$ in $\mathbb{U}_\bz$, {\it i.e.}
$U_{i, j}:= \langle Ue_i, e_j \rangle$, for all $i, j\in\bz$, we have
$U_{i, j}=\delta_{i,j}$ for all $i, j$ with $|i|, |j|> N$.
In particular, it follows that $\mathbb{U}_\bz$ is an inductive limit of unitary matrix groups. Thus,
it is convenient to think of an element $U$ of $\mathbb{U}_\bz$ as an infinite matrix $(U_{i,j})_{i,j\in\bz}$.\\
Lastly, $\bo_\bz$ is the subgroup of  $\mathbb{U}_\bz$ obtained by considering real unitary maps, namely
$O$ in $\mathbb{U}_\bz$ belongs to $\bo_\bz$  if and only if
$\langle Oe_i, e_j \rangle$ are real numbers for all $i, j\in\bz$.
The three groups  introduced all act through automorphisms on ${\rm CCR}(\ch, \s)$
by \eqref{action}. This allows us to consider the  invariant states of
the corresponding  $C^*$-dynamical systems.\\
By $C^*$-dynamical system  we mean here  a triplet $(\ga, G,\alpha)$, where $\ga$ is a unital $C^*$-algebra, $G$ a group, and
$\alpha:G \rightarrow {\rm Aut}(\ga)$ a group homomorphism, {\it i.e.} $\alpha_{gh}=\alpha_g\circ\a_h$, $g, h\in G$.\\
A state $\om$ on $\ga$ is $G$-invariant (or invariant under the action $\a$ of $G$) if $\om\circ\alpha_g=\om$ for every $g\in G$. The  set of all
$G$-invariant states is denoted by $\cs^G(\ga)$.\\
Going back to ${\rm CCR}(\ch, \s)$, its states invariant under $\bp_\bz$ are known as exchangeable (or symmetric) states.
Borrowing the terminology from Classical Probability, we shall henceforth refer to
 $\bo_\bz$-invariant states as  orthogonally invariant or rotatable states. We shall refer to $\mathbb{U}_\bz$-invariant states
as unitarily invariant states.\\
For any $C^*$-dynamical system $(\ga, G, \a)$, $\cs^G(\ga)$ is a weakly* compact convex subset of
the set of all states of $\ga$. The set of extreme points  of $\cs^G(\ga)$, which is non-empty by virtue of the Krein-Milman theorem, is
 denoted by $\ce(\cs^G(\ga))$. The elements of $\ce(\cs^G(\ga))$ are often referred to as the ergodic states of the given
$C^*$-dynamical system.\\
On the GNS space $\ch_\om$ of any $G$-invariant state $\om$, it is possible to define a family of unitaries $\{U_g^\om: g\in G\}$ that implement the action
of the group itself, namely for each $g\in G$
$$U_g^\om \pi_\om(a) (U_g^\om)^*= \pi_\om(\a_g(a))\,, \quad a\in\ga\,.$$
The closed subspace of invariant vectors is then defined as
$$\ch_\om^G:=\bigcap_{g\in G}\{\xi\in\ch_\om: U_g^\om\xi=\xi\}\,.$$
Note that $\ch_\om^G$ is never zero as it always contains $\bc\xi_\om$, where $\xi_\om\in\ch_\om$ is
the GNS vector of $\om$. The orthogonal projection onto $\ch_\om^G$ is denoted by $E_\om$.
We recall that $\ch_\om^G=\bc\xi_\om$ implies that $\om$ is ergodic.
Unlike what happens in the classical theory, the converse implication may fail to hold; it does hold, however, if
we assume that the system is $G$-abelian, that is if for any $\om\in\cs^G(\ga)$, the family of operators
$\{E_\om\pi_\om(a)E_\om: a\in \ga\}$ is commutative, see Theorem 3.1.12 in \cite{S}. Moreover,
$G$-abelianness is often obtained as a consequence of a stronger property known as asymptotical
abelianness: a $C^*$-dynamical system $(\ga, G, \a)$ is asymptotically abelian if there exists a sequence
$\{g_n:n\in\bn\}\subset G$ such that, for every $a, b\in\ga$,
$\lim_{n\rightarrow\infty}\|\a_{g_n}(a)b-b\a_{g_n}(a)\|=0$, see Proposition
3.1.16 in \cite{S}.\\

\section{Invariant States}

We denote by $\ga$ the CCR algebra associated with $\bc$ endowed with its canonical
symplectic form $\sigma(z, w):= \Im z\bar{w}$, $z, w\in \bc$.
As is well known, $\ga$ is a (non-separable) simple nuclear
$C^*$-algebra, see {\it e.g.} \cite{Evans}. To ease the notation, we will from now on denote by $\mathfrak{B}$ the CCR algebra associated with
$(\ch, \sigma)$. Let $\ch_n\subset\ch$ denote the finite-dimensional subspace generated by
$\{e_k: |k|\leq n\}$. We will think of $\ch_n\subset\ch$ as a symplectic real space whose symplectic form is the restriction
of $\s$ to $\ch_n$. The corresponding CCR algebra,  that is
${\rm CCR}(\ch_n, \sigma)$, will be denoted by $\mathfrak{B}_n$. Since $\mathfrak{B}_n\subset\mathfrak{B}_{n+1}$,
we can consider the inductive limit $\mathfrak{B}_0\subset\mathfrak{B}$ of the sequence of $C^*$-algebras $\{\mathfrak{B}_n: n\in\bn\}$. More concretely,
$\mathfrak{B}_0$ is the norm completion in $\gb$ of $\bigcup_n \gb_n$.
Note that $\gb_0$ is itself a CCR algebra, in that $\gb_0={\rm CCR}(\ch_0,\sigma)$ with
$\ch_0=\bigcup_{n}\ch_n$.
The inclusion
$\gb_0\subset\gb$ is proper, as follows from {\it e.g.} \cite[Proposition 5.2.9.]{BR2}.
What is more relevant to our analysis is that the subalgebra $\gb_0$ decomposes as an infinite tensor product
of $\ga$ with itself. Let us quickly recall what the isomorphism looks like.
For each $k\in\bz$, denote by $i_k: \ga\rightarrow \bigotimes_\bz \ga$ the
$k$-th embedding, that is $$i_k(a)=\cdots\otimes 1 \otimes 1\otimes \underbrace{a}_{k^{\rm th}}\otimes 1\otimes 1\otimes\cdots\, .$$
That said, the isomorphism alluded to above is nothing but the map
$\Psi: \gb_0\rightarrow\bigotimes_\bz \ga$ uniquely determined  by
\begin{equation}\label{iso}
\Psi\bigg(W\bigg(\sum_{|k|\leq n} z_k e_k\bigg)\bigg)=\prod_{|k|\leq n} i_k(W(z_k))
\end{equation}
for all $n\in\bn$ and for all $z_k\in\bc$ with $k= -n, \ldots, n$. Elements of $\bigotimes_\bz \ga$ of the form
$i_{k_1}(a_1)\cdots i_{k_n}(a_n)$, for $n\in\bn$, $k_1, \ldots, k_n\in\bz$ and $a_1, \ldots, a_n\in\ga$,
 will be sometimes referred to as
localized tensors.\\
Rather than work in $\gb_0$ only,  we will sometimes use  its isomorphic copy. To do so, we first need
to rewrite the action of $\mathbb{U}_\bz$ on $\bigotimes_\bz\ga$. This is done by combining \eqref{action} and \eqref{iso}, which for  any $U=(U_{i, j})_{i,j\in\bz}$ in $\mathbb{U}_\bz$ gives
\begin{equation}\label{actens}
\rho_U(i_k(W(z)))=\prod_j i_j(W(U_{j, k}z))\,, \quad k\in\bz\,,\,\, z\in\bc\, .
\end{equation}
As an immediate consequence of \eqref{actens}, for every $U\in \mathbb{U}_\bz$, one has $\rho_U(\gb_0)\subseteq\gb_0$.\\
We start our analysis of invariant states on the CCR algebra by considering
the smaller dynamical system obtained by restricting the dynamics to
$\gb_0$. To this aim, we will need to consider the following sequence $\{g_n: n\in\bn\}$ of bijections of
$\bz$:

$$
g_n(k)
=\begin{cases}
 k+ 2^{n-1} \, ,& 0\leq k < 2^{n-1}\\
  k -2^{n-1}\, ,& 2^{n-1}\leq k < 2^{n}\\
  k\, , & k\geq2^{n}\,\\
k- 2^{n-1}\,,&- 2^{n-1}\leq k \leq -1\\
k + 2^{n-1}\,, &- 2^n\leq k < -2^{n-1}\\

k\,, & k< -2^n
\end{cases}
$$
For each $n$, $g_n$ is an element of the group $\bp_\bz$. In particular, the
sequence $\{g_n: n\in \bn\}$ is contained in $\bo_\bz$.
By a slight abuse of notation, we continue to denote by
$g_n$ the corresponding matrix. Note that for each line of this matrix there is exactly one entry that is equal to $1$, while the others are all $0$.

\begin{lem}\label{asyabel}
The restricted $C^*$-dynamical system $(\gb_0, \bo_\bz, \rho)$ is
asymptotically abelian.
\end{lem}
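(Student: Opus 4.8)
The plan is to verify the definition of asymptotic abelianness directly for the sequence $\{g_n\}_{n\in\bn}\subset\bo_\bz$ given above. The guiding observation is that each $g_n$ is a permutation of $\bz$ which, once $n$ is large enough, pushes any prescribed finite block of indices entirely off itself; once the support of $\rho_{g_n}(a)$ has been moved away from that of $b$, the canonical commutation relations \eqref{CCRWeyl} force the relevant commutator to vanish \emph{exactly}, not just asymptotically. Recall that $(\gb_0,\bo_\bz,\rho)$ is a genuine $C^*$-dynamical system, since $\{g_n\}\subset\bo_\bz\subseteq\mathbb U_\bz$ and $\rho_U(\gb_0)\subseteq\gb_0$ for every $U\in\mathbb U_\bz$.

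First I would reduce the statement to Weyl generators. Each $\rho_{g_n}$ is an automorphism of $\gb_0$ (the map $g_n$ being a bijection of $\bz$), hence isometric, so $\|\rho_{g_n}(a)\|=\|a\|$ for every $a$. A routine $3\eps$-type estimate then shows that it suffices to prove $\lim_{n\to\infty}\|\rho_{g_n}(a)b-b\rho_{g_n}(a)\|=0$ for $a,b$ ranging over a norm-dense $*$-subalgebra of $\gb_0$. I take this to be $\spn\{W(x):x\in\ch_0\}$, which by \eqref{CCRWeyl} is a $*$-algebra and is dense in $\gb_0={\rm CCR}(\ch_0,\sigma)$. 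By bilinearity of the commutator the claim reduces further to the case $a=W(x)$, $b=W(y)$ with $x,y\in\ch_0$.

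Now fix $x,y\in\ch_0$ and choose $m$ with $x,y\in\ch_m$, say $x=\sum_{|k|\le m}x_k e_k$ and $y=\sum_{|k|\le m}y_k e_k$. By \eqref{action}, $\rho_{g_n}(W(x))=W\big(\sum_{|k|\le m}x_k e_{g_n(k)}\big)$, so $\rho_{g_n}(W(x))$ involves only the basis vectors indexed by $g_n(\{-m,\dots,m\})$. Reading off the definition of $g_n$, one checks that as soon as $2^{n-1}>m$ one has $g_n(k)\ge 2^{n-1}>m$ for $0\le k\le m$, and $g_n(k)\le-2^{n-1}-1<-m$ for $-m\le k\le-1$; hence $g_n(\{-m,\dots,m\})\cap\{-m,\dots,m\}=\varnothing$, and the vectors $\sum_{|k|\le m}x_k e_{g_n(k)}$ and $y$ are orthogonal. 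Consequently $\sigma\big(\sum_{|k|\le m}x_k e_{g_n(k)},\,y\big)=0$, and \eqref{CCRWeyl} gives
\begin{equation*}
\rho_{g_n}(W(x))\,W(y)=W\Big(y+\sum_{|k|\le m}x_k e_{g_n(k)}\Big)=W(y)\,\rho_{g_n}(W(x))
\end{equation*}
for all $n$ with $2^{n-1}>m$. Thus the commutator is identically zero for large $n$, and unwinding the two reductions above yields the lemma. (Equivalently, the same argument may be phrased on the isomorphic copy $\bigotimes_\bz\ga$ via \eqref{actens}: a localized tensor and its $\rho_{g_n}$-image are eventually supported on disjoint tensor factors, hence commute.)

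I do not anticipate a genuine obstacle. The only mildly delicate points are the standard density-plus-isometry reduction to Weyl operators and the elementary index bookkeeping confirming that each finite block $\{-m,\dots,m\}$ is eventually displaced off itself by the $g_n$; with both in hand, \eqref{CCRWeyl} does the rest, and one in fact obtains the slightly stronger conclusion that the commutators $\rho_{g_n}(a)b-b\rho_{g_n}(a)$ vanish exactly for $n$ large when $a,b$ lie in the dense $*$-algebra generated by the Weyl operators.
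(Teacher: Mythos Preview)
Your proof is correct and follows essentially the same approach as the paper: reduce by density to localized elements, verify that for $2^{n-1}>m$ the permutation $g_n$ maps $\{-m,\dots,m\}$ off itself, and conclude that the commutator vanishes exactly. The only cosmetic difference is that the paper phrases the computation in the tensor-product picture $\bigotimes_\bz\ga$ via \eqref{actens}, whereas you work directly with Weyl operators in ${\rm CCR}(\ch_0,\sigma)$ via \eqref{action}; you yourself note this equivalence in your final parenthetical remark.
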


\begin{proof}
We will prove that, for any $a, b \in\bigotimes_\bz\ga$, one has
$$\lim_{n\rightarrow\infty} \|\rho_{g_n}(a)b-b\rho_{g_n}(a)\|=0\, .$$
A standard density argument shows it is enough to deal with $a, b$ lying in a dense subalgebra of
$\bigotimes_\bz\ga$. With this in mind, we can consider localized simple tensors $a, b$. In other words, there is no lack of generality if we take $a=\prod_{|k|\leq m }i_k(W(\lambda_k))$ and
$b=\prod_{|l|\leq m }i_l(W(\mu_l))$ for some $m>0$ and $\lambda_k, \mu_l\in\bc$ for all $k, l\in\{ -m, \ldots,0 ,\ldots, m \}$.
Applying \eqref{actens}, one finds
$$\rho_{g_n}(a)=\prod_j\prod_{|k|\leq m} i_j(W(\lambda_k (g_n)_{j,k})\, .$$
Now if $n$ satisfies $2^{n-1}>m$, the definition
of $g_n$ implies that the entries $(g_n)_{j, k}$ are $0$ for all $j$ with $|j|>m$. Phrased
differently, for $n$ such that $2^{n-1}>m$ one has that $\rho_{g_n}(a)$ is a simple tensor whose $j$-th factors are different from the identity $I$ only when $|j|>m$, hence $\rho_{g_n}(a)$ commutes with $b$.
\end{proof}

As a consequence of the lemma above, we find that $(\gb_0, \bo_\bz, \rho)$ is an $\bo_\bz$-abelian dynamical system.
Moreover, $\bo_\bz$-abelianness in turn implies that $\cs^{\bo_\bz}(\gb_0)$ is a Choquet simplex, see Theorem 3.1.14 in \cite{S}.\\
The next goal we want to achieve is to show that any extreme state of $\cs^{\bo_\bz}(\gb_0)$ factors into an infinite product of a single state $\rho$ of the sample algebra $\ga$. 
To do so, we first need a couple of lemmas.

\begin{lem}\label{Ogn}
For every  localized simple tensor  $a$ in $\gb_0\cong\bigotimes_\bz\ga$ and  every $O$ in $\bo_\bz$, one has
$$\rho_{Og_n}(a)=\rho_{g_n}(a)$$
eventually for all $n$.
\end{lem}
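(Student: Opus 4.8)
The plan is to exploit the explicit structure of the bijections $g_n$: for $n$ large they displace the support of any fixed localized tensor out to indices of modulus at least $2^{n-1}$, hence past the finite window on which a prescribed $O\in\bo_\bz$ acts nontrivially. Since $O$ is the identity on the span of all but finitely many of the $e_i$, it then acts trivially on the displaced support, and $\rho_{Og_n}$ must agree with $\rho_{g_n}$ on $a$. Note that $Og_n\in\bo_\bz$, this being a group, so that $\rho_{Og_n}$ is indeed available, and $\rho_{Og_n}=\rho_O\circ\rho_{g_n}$.

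First I would carry out two routine reductions. Writing the localized simple tensor $a$ as a product $i_{k_1}(b_1)\cdots i_{k_p}(b_p)$ with $|k_l|\leq m$ for all $l$, and using that $\rho_{Og_n}$ and $\rho_{g_n}$ are $*$-homomorphisms, it is enough to show $\rho_{Og_n}(i_k(b))=\rho_{g_n}(i_k(b))$ for every $|k|\leq m$ and every $b\in\ga$; and since $\ga={\rm CCR}(\bc)$ is generated by the Weyl unitaries $W(z)$, $z\in\bc$, and both maps are norm contractive, it suffices to take $b=W(z)$. By \eqref{actens} the desired equality then reads $\prod_j i_j(W((Og_n)_{j,k}z))=\prod_j i_j(W((g_n)_{j,k}z))$, so, recalling $(g_n)_{j,k}=\delta_{j,g_n(k)}$, everything comes down to checking that $(Og_n)e_k=e_{g_n(k)}$ for all $|k|\leq m$ once $n$ is large enough (equivalently, one may argue directly in $\gb_0$ via \eqref{action}, where $i_k(W(z))$ corresponds to $W(ze_k)$).

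To finish, fix $N>0$ with $Oe_i=e_i$ for every $|i|>N$ and let $n_0$ satisfy $2^{n_0-1}>\max\{m,N\}$. For $n\geq n_0$ and $|k|\leq m$, the definition of $g_n$ gives $g_n(k)=k+2^{n-1}$ when $0\leq k\leq m$ and $g_n(k)=k-2^{n-1}$ when $-m\leq k\leq -1$, so $|g_n(k)|\geq 2^{n-1}>N$ in all cases; hence $(Og_n)e_k=Oe_{g_n(k)}=e_{g_n(k)}=g_ne_k$, which is exactly what was needed, and therefore $\rho_{Og_n}(a)=\rho_{g_n}(a)$ for all $n\geq n_0$. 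There is no real obstacle here beyond the bookkeeping; the one point to watch is to choose $n_0$ large enough to overtake \emph{both} localization thresholds at once — the window $m$ of $a$ and the window $N$ of $O$ — which is precisely the role of the inequality $2^{n_0-1}>\max\{m,N\}$.
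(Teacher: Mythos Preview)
Your proof is correct and follows essentially the same approach as the paper's: choose $n$ with $2^{n-1}$ larger than both the localization radius of $a$ and the window outside which $O$ acts as the identity, so that $\rho_{g_n}(a)$ is supported on indices where $\rho_O$ is trivial. Your version is slightly more explicit in reducing to single factors $i_k(W(z))$ and verifying $|g_n(k)|\geq 2^{n-1}$ case by case, but the argument is the same.
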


\begin{proof}
By hypothesis $a$ can be taken as $\prod_{|k|\leq m }i_k(W(\lambda_k))$
for some $m>0$ and $\lambda_k\in\bc$ for all $k$'s in $\{-m, \ldots, m\}$.
In addition, there exists $d >0$ such that $O_{i,j}=\delta_{i, j}$ for all $i, j$ with
$|i|, |j|>d$. The statement holds if $n$ satisfies $2^{n-1}\geq\max\{m, d\}$.
Indeed, $\rho_O$ acts as the identity on simple tensors whose factors different from $I$ lie
outside the discrete interval $\{-d, \ldots, d\}$, and $\rho_{g_n}(a)$ is a tensor of this form.
\end{proof}

\begin{lem}\label{prelclust}
If $\om$ is an extreme state in $\cs^{\bo_\bz}(\gb_0)$, then for every $a$ in $\gb_0$ one has
$$\lim_{n\rightarrow\infty}\pi_\om(\rho_{g_n}(a))\xi_\om=\om(a)\xi_\om$$
in the weak topology of $\ch_\om$.
\end{lem}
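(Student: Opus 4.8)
The plan is to show that every weak cluster point of the bounded sequence $\eta_n:=\pi_\om(\rho_{g_n}(a))\xi_\om$ equals $\om(a)\xi_\om$; since the $\eta_n$ all lie in the weakly compact ball of radius $\|a\|$ in $\ch_\om$, this immediately yields the claimed weak convergence. The starting point is that, by Lemma \ref{asyabel} and the remark following it, $(\gb_0,\bo_\bz,\rho)$ is $\bo_\bz$-abelian, so the extremality of $\om$ forces $\ch_\om^{\bo_\bz}=\bc\xi_\om$ (this is the nontrivial direction of the characterization of ergodicity for $G$-abelian systems, Theorem~3.1.12 in \cite{S}). Hence it suffices to prove that each weak cluster point $\eta$ of $(\eta_n)$ is $\bo_\bz$-invariant and then to compute its component along $\xi_\om$.

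First I would reduce to $a$ ranging over a dense $*$-subalgebra, namely the linear span of the localized simple tensors $\prod_{|k|\le m} i_k(W(\lambda_k))$. Indeed, $\rho_{g_n}$ and $\pi_\om$ are contractive and $\|\xi_\om\|=1$, so $\|\pi_\om(\rho_{g_n}(a))\xi_\om-\pi_\om(\rho_{g_n}(b))\xi_\om\|\le\|a-b\|$ and $|\om(a)-\om(b)|\le\|a-b\|$; a routine $3\eps$-argument (together with the uniform bound $\|\eta_n\|\le\|a\|$) then transfers weak convergence from a norm-dense set of $a$'s to all of $\gb_0$. For $a$ a finite linear combination of localized simple tensors, fix $g\in\bo_\bz$ and use the covariance relation together with $U^\om_{g_n}\xi_\om=\xi_\om$ (valid since $\om$ is $\bo_\bz$-invariant and $g_n\in\bo_\bz$) to get $U^\om_g\eta_n=U^\om_gU^\om_{g_n}\pi_\om(a)\xi_\om=\pi_\om(\rho_{gg_n}(a))\xi_\om$. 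By Lemma \ref{Ogn} applied with $O=g$ (taking the maximum of the finitely many thresholds arising from the summands of $a$), one has $\rho_{gg_n}(a)=\rho_{g_n}(a)$ for all large $n$, whence $U^\om_g\eta_n=\eta_n$ eventually. Consequently, if $\eta$ is a weak cluster point of $(\eta_n)$, then for each fixed $g$ the tail $\{\eta_n:n\ge N_g\}$ is contained in the weakly closed set $\{v\in\ch_\om:U^\om_g v=v\}$, so $\eta$ lies there too; thus $\eta\in\ch_\om^{\bo_\bz}=\bc\xi_\om$.

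It remains to identify the scalar. Since each $g_n\in\bo_\bz$ and $\om$ is $\bo_\bz$-invariant, $\langle\eta_n,\xi_\om\rangle=\om(\rho_{g_n}(a))=\om(a)$ for every $n$; hence the $\xi_\om$-component of any cluster point $\eta$ equals $\om(a)$, i.e. $\eta=\om(a)\xi_\om$. As $(\eta_n)$ is a sequence in a weakly compact set admitting $\om(a)\xi_\om$ as its only cluster point, it converges weakly to $\om(a)\xi_\om$, as required. The main point — and the reason the somewhat elaborate sequence $\{g_n\}$ was introduced — is that the argument runs \emph{along this prescribed sequence} rather than through Cesàro averages: the combinatorial ``swallowing'' features of the $g_n$ encoded in Lemmas \ref{asyabel} and \ref{Ogn} are exactly what makes the cluster points $\bo_\bz$-invariant. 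A minor technical care is that $\ch_\om$ need not be separable, so I will work with weak cluster points and nested tail closures rather than extracting weakly convergent subsequences.
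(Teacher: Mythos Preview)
Your argument is correct and follows essentially the same route as the paper's own proof: reduce to localized $a$, use weak compactness of the ball, show via Lemma~\ref{Ogn} that every cluster point of $\eta_n$ is $\bo_\bz$-invariant, invoke $\bo_\bz$-abelianness (Lemma~\ref{asyabel}) together with extremality to force $\ch_\om^{\bo_\bz}=\bc\xi_\om$, and then identify the scalar via $\langle\eta_n,\xi_\om\rangle=\om(a)$. Your treatment is in fact slightly more careful than the paper's in two places: you spell out the $3\eps$ density reduction (the paper merely asserts it) and you consciously avoid extracting subsequences in the possibly non-separable $\ch_\om$, working instead with tail closures, whereas the paper passes through subnets and then speaks of ``the chosen subsequence''.
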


\begin{proof}
As usual, we will identify $\gb_0$ with the infinite tensor product $\bigotimes_\bz\ga$.
We do not harm the generality of the proof if we suppose that $a$ is localized.
By weak compactness of the unit ball of the Hilbert space $\ch_\om$, the bounded sequence $\{\pi_\om(\rho_{g_n}(a))\xi_\om: n\in\bn\}\subset\ch_\om$
must have at least one accumulation point, say $\xi$.
Now it is easy to verify that any such vector is $\bo_\bz$-invariant. Indeed, by definition there exists a subnet
$\{\pi_\om(\rho_{g_{n_\a}}(a))\xi_\om: \a\in I\}$ such that $\xi=\lim_\a \pi_\om(\rho_{g_{n(\a)}}(a))\xi_\om$ (in the weak
topology of $\ch_\om$). But then  for
any $O\in\bo_\bz$ we have
\begin{align*}
U_O^\om \xi=&\lim_\a U_O^\om\pi_\om(\rho_{g_{n_\a}}(a))\xi_\om=\lim_\a U_O^\om\pi_\om(\rho_{g_{n_\a}}(a))(U_O^\om)^*\xi_\om\\
=&\lim_\a \pi_\om(\rho_{Og_{n_\a}}(a))\xi_\om=\lim_\a\pi_\om(\rho_{g_{n_\a}}(a))\xi_\om=\xi
\end{align*}
where the second-last equality is a consequence of Lemma \ref{Ogn}.
Since $\om$ is extreme, we must have $\xi=\lambda\xi_\om$ for some
$\lambda$ in $\bc$ due to
Lemma \ref{asyabel} and Proposition 3.1.12 in \cite{S}.
Now $\lambda$ is easily seen to equal $\om(a)$ irrespective of the chosen
subsequence. In other words, $\om(a)\xi_\om$ is the only accumulation point of the set $\{\pi_\om(\rho_{g_n}(a))\xi_\om: n\in\bn\}$. Therefore, by weak compactness of the unit ball of $\ch_\om$  the whole
sequence $\{\pi_\om(\rho_{g_n}(a))\xi_\om: n\in\bn\}$ must converge to $\om(a)\xi_\om$, which ends the proof.
\end{proof}

\begin{prop}\label{extreme}
If $\om$ is an extreme state in $\cs^{\bo_\bz}(\gb_0)$, then there exists a state $\varphi$ on $\ga$ such that
$\om=\otimes_\bz  \varphi$.
\end{prop}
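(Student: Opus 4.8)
The plan is to leverage the tensor decomposition $\gb_0\cong\bigotimes_\bz\ga$ together with the clustering statement of Lemma \ref{prelclust}. First I would record that, since $\bp_\bz\subseteq\bo_\bz$, every state in $\cs^{\bo_\bz}(\gb_0)$ is in particular exchangeable; this is the ingredient that will turn an asymptotic factorization into an exact one. Next I would deduce from Lemma \ref{prelclust} that $\lim_{n}\om\big(b\,\rho_{g_n}(a)\big)=\om(a)\om(b)$ for all $a,b\in\gb_0$: writing $\om(\cdot)=\langle\pi_\om(\cdot)\xi_\om,\xi_\om\rangle$ one has $\om\big(b\,\rho_{g_n}(a)\big)=\langle\pi_\om(\rho_{g_n}(a))\xi_\om,\pi_\om(b^*)\xi_\om\rangle$, and Lemma \ref{prelclust} says the first entry converges weakly to $\om(a)\xi_\om$, so the inner product tends to $\om(a)\overline{\om(b^*)}=\om(a)\om(b)$.

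The heart of the argument is the factorization of $\om$ over disjoint supports: if $a=\prod_{k\in F}i_k(W(\lambda_k))$ and $b=\prod_{l\in G}i_l(W(\mu_l))$ with $F,G\subset\bz$ finite and disjoint, I claim $\om(ab)=\om(a)\om(b)$. Choosing $m$ with $F\cup G\subseteq\{-m,\dots,m\}$, the explicit form of $g_n$ gives $g_n(F)\cap(F\cup G)=\emptyset$ as soon as $2^{n-1}>m$ (the same computation as in the proof of Lemma \ref{asyabel}). For such $n$ I would build a \emph{finite} permutation $\pi_n\in\bp_\bz$ that agrees with $g_n$ on $F$ and fixes $G$ pointwise — for instance the involution exchanging $F$ and $g_n(F)$ through $g_n|_F$ and acting as the identity elsewhere. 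Since $\rho_{\pi_n}$ acts on a localized tensor only through the restriction of $\pi_n$ to its support, one gets $\rho_{\pi_n}(a)=\rho_{g_n}(a)$ and $\rho_{\pi_n}(b)=b$; as $\rho_{g_n}(a)$ and $b$ are localized at the disjoint sets $g_n(F)$ and $G$, they commute, whence $\rho_{\pi_n}(ab)=\rho_{g_n}(a)\,b=b\,\rho_{g_n}(a)$. Because $\pi_n\in\bp_\bz\subseteq\bo_\bz$ and $\om$ is $\bo_\bz$-invariant, this yields $\om(ab)=\om\big(b\,\rho_{g_n}(a)\big)$ for all large $n$; letting $n\to\infty$ and invoking the asymptotic factorization above gives $\om(ab)=\om(a)\om(b)$.

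With the disjoint-support factorization in hand, an induction on the number of sites — splitting off one Weyl factor at a time and using that localized tensors at distinct sites commute — shows $\om\big(\prod_{j=1}^r i_{k_j}(W(z_j))\big)=\prod_{j=1}^r\om\big(i_{k_j}(W(z_j))\big)$ whenever $k_1,\dots,k_r$ are pairwise distinct. Exchangeability then forces the one-site marginals to coincide, $\om(i_k(c))=\om(i_0(c))$ for all $k\in\bz$ and $c\in\ga$ (since $\rho_\tau(i_k(c))=i_0(c)$ for the transposition $\tau=(0\,k)\in\bp_\bz$ and $\om\circ\rho_\tau=\om$), so $\varphi:=\om\circ i_0$ is a state on $\ga$. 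The displayed identity says precisely that $\om$ and $\otimes_\bz\varphi$ agree on every product $\prod_{k\in F}i_k(W(z_k))$, $F\subset\bz$ finite; under $\Psi$ these are exactly the images of the Weyl operators generating $\gb_0$, whose linear span is dense, so — both functionals being bounded — $\om=\otimes_\bz\varphi$.

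The only genuinely delicate point I anticipate is the combinatorial construction in the middle step: producing, for each large $n$, a finite permutation interpolating $g_n$ on $\supp(a)$ while leaving $\supp(b)$ fixed, and then observing that $n\mapsto\om\big(b\,\rho_{g_n}(a)\big)$ is \emph{eventually constant}, equal to $\om(ab)$, which is what legitimizes passing to the limit. The remaining ingredients — the weak-convergence computation, the induction, and the density argument — are routine.
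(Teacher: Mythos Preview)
Your proof is correct and follows the same strategy as the paper's: derive strong clustering from Lemma~\ref{prelclust}, define $\varphi:=\om\circ i_0$ via exchangeability, and factorize by induction on the number of sites. The only difference is that where the paper delegates the inductive step to St{\o}rmer \cite[Theorem~2.7]{StorJFA69}, you make it self-contained via the explicit permutation $\pi_n$ interpolating $g_n$ on $\supp(a)$ while fixing $\supp(b)$; the underlying idea is the same.
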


\begin{proof}
We start by showing that any extreme state is strongly clustering, that is
$$\lim_n \om (\rho_{g_n}(a) b)=\om(a)\om(b)$$
for all $a, b\in\gb_0$. This is a consequence of Lemma \ref{prelclust}, for
\begin{align*}
\lim_n \om (\rho_{g_n}(a) b)&=\lim_n \langle \pi_\om(\rho_{g_n}(a))\pi_\om(b)\xi_\om ,\xi_\om \rangle\\
&=\lim_n \langle \pi_\om(b)\xi_\om, \pi_\om(\rho_{g_n}(a^*))\xi_\om\rangle=\om(a)\om(b).
\end{align*}
We next define $\varphi$ as a state on $\ga$ by setting
$\varphi(a):=\om(i_j(a))$, $a\in\ga$, $j\in\bz$. The definition is well posed since it does not depend on $j$, for $\om$ is exchangeable being
even rotatable. In order to prove the thesis, we need to show that
$$\om(i_{j_1}(a_1) \cdots i_{j_n}(a_n) )=\varphi(a_1)\cdots\varphi(a_n)$$
for every $n\in\bn$, for every $j_1, \ldots, j_n$ in $\bz$, and $a_1, \ldots, a_n$ in $\ga$. This can be done by induction
on $n$ exactly as in the proof  of \cite[Theorem 2.7.]{StorJFA69}, to which the reader is referred for the missing
details.
\end{proof}

We denote by $\cs^{\bz}(\gb_0)$  the set of stationary states on $\gb_0$, that is of those states that are invariant under the shift automorphism.
In order to state the following result, we recall that
a convex subset $\Delta\subset C$ of a convex set $C$ is a face
if given $x, y\in C$ such that $tx+(1-t)y$ lies in $\Delta$ for some $t$ with $0<t<1$, then $x, y$ are in $\Delta$, in which case we write $\Delta\subset_F  C$.

\begin{prop}\label{faces}
There holds the inclusion of faces:
$$\cs^{\mathbb{U}_\bz}(\gb_0)\subset_F \cs^{\mathbb{O}_\bz}(\gb_0) \subset_F \cs^{\bp_\bz}(\gb_0)\subset_F \cs^{\bz}(\gb_0)\,.$$
\end{prop}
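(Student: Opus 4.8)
The plan is to reduce all three face inclusions to a single structural fact about Choquet simplices, which is then fed with the description of the extreme invariant states already obtained. First I would record that the four convex sets in the statement are all Choquet simplices. The sequence $\{g_n\}$ of Lemma \ref{asyabel} lies in $\bp_\bz\subseteq\bo_\bz\subseteq\mathbb{U}_\bz$, and the proof of that lemma only manipulates the elements $\rho_{g_n}(a)$; hence $(\gb_0,\bp_\bz,\rho)$, $(\gb_0,\bo_\bz,\rho)$ and $(\gb_0,\mathbb{U}_\bz,\rho)$ are all asymptotically abelian, so $G$-abelian, and the corresponding state sets are Choquet simplices by Theorem 3.1.14 in \cite{S}; the same argument with the powers of the shift in place of $\{g_n\}$ (two localized elements commute once one of them has been translated far enough) shows $\cs^{\bz}(\gb_0)$ is a Choquet simplex as well. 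I would also note the inclusions of sets $\cs^{\mathbb{U}_\bz}(\gb_0)\subseteq\cs^{\bo_\bz}(\gb_0)\subseteq\cs^{\bp_\bz}(\gb_0)\subseteq\cs^{\bz}(\gb_0)$; the last holds because on any localized element the shift agrees with a sufficiently long cyclic permutation, so that exchangeable states are automatically stationary.

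The structural fact I would isolate is this: if $K$ is a Choquet simplex and $F\subseteq K$ a closed convex subset that is itself a Choquet simplex with $\ce(F)\subseteq\ce(K)$, then $F$ is a face of $K$. Its proof rests on the characterisation (Choquet--Bishop--de Leeuw) of a representing measure as maximal precisely when it is pseudo-supported by the set of extreme points. Given $x\in F$, its maximal representing measure relative to the simplex $F$ is pseudo-supported by $\ce(F)\subseteq\ce(K)$, hence is also maximal relative to $K$, hence, by uniqueness of maximal measures in the simplex $K$, equals the maximal measure $\mu_x$ of $x$ relative to $K$; thus $\mu_x$ is pseudo-supported by $\ce(F)$. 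If $z=tx+(1-t)y\in F$ with $x,y\in K$ and $0<t<1$, then $\mu_z=t\mu_x+(1-t)\mu_y$ because $w\mapsto\mu_w$ is affine on a simplex; since $\mu_z$ is pseudo-supported by $\ce(F)$ and $t,1-t>0$, the same holds for $\mu_x$ and $\mu_y$, and as $F$ is closed and convex their barycentres $x$ and $y$ lie in $F$. (Since $\ga$ is non-separable, none of the state spaces here are metrizable, so throughout one must use maximal measures and pseudo-supports rather than honest supports; this is routine but should be stated explicitly.)

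It then remains to verify $\ce(F)\subseteq\ce(K)$ for the three pairs at hand. By Størmer's theorem \cite{StorJFA69} applied to $\gb_0\cong\bigotimes_\bz\ga$, the extreme points of $\cs^{\bp_\bz}(\gb_0)$ are exactly the product states $\otimes_\bz\varphi$ with $\varphi$ a state on $\ga$; each such state is shift clustering (for localized $a,b$, $(\otimes_\bz\varphi)(\tau^n(a)b)=(\otimes_\bz\varphi)(a)(\otimes_\bz\varphi)(b)$ once $n$ is large), hence shift ergodic, hence extreme in $\cs^{\bz}(\gb_0)$; this yields $\ce(\cs^{\bp_\bz}(\gb_0))\subseteq\ce(\cs^{\bz}(\gb_0))$. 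Proposition \ref{extreme} gives $\ce(\cs^{\bo_\bz}(\gb_0))\subseteq\{\otimes_\bz\varphi\}=\ce(\cs^{\bp_\bz}(\gb_0))$, and the $\mathbb{U}_\bz$-analogues of Lemmas \ref{Ogn}, \ref{prelclust} and Proposition \ref{extreme}, whose proofs use only that $\{g_n\}$ is contained in the acting group, give $\ce(\cs^{\mathbb{U}_\bz}(\gb_0))\subseteq\{\otimes_\bz\varphi\}$; a product state lying in $\cs^{\mathbb{U}_\bz}(\gb_0)$ is $\bo_\bz$-invariant and, being extreme in $\cs^{\bp_\bz}(\gb_0)$, is a fortiori extreme in the smaller set $\cs^{\bo_\bz}(\gb_0)$, so $\ce(\cs^{\mathbb{U}_\bz}(\gb_0))\subseteq\ce(\cs^{\bo_\bz}(\gb_0))$. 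Applying the structural fact to the three pairs, and using that a face of a face is a face, produces the asserted chain. The step demanding the most attention is this last one: assembling the three descriptions of extreme states, in particular checking that the proof of Proposition \ref{extreme} carries over verbatim to $\mathbb{U}_\bz$ and that the classical inputs (Størmer's theorem, and ergodicity of product states under the shift) are correctly imported; by contrast the structural fact of the middle step is essentially formal once the non-metrizable Choquet theory is in place.
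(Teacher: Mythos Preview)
Your proposal is correct and follows essentially the same approach as the paper: establish that all four sets are Choquet simplices, reduce the face property to checking that extreme points of each smaller set remain extreme in the next larger one (the paper delegates this reduction to the argument of Theorem~3.5 in \cite{CDR}, whereas you spell it out explicitly via maximal measures and take care of the non-metrizability), and then verify the extreme-point inclusions using Proposition~\ref{extreme} and St{\o}rmer's theorem. The paper's proof is considerably more terse, but the underlying strategy is identical.
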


\begin{proof}
We start by pointing out that the four sets in the statement are all Choquet simplices by asymptotical abelianness of the corresponding dynamical systems. Therefore, we can argue as in the proof of Theorem 3.5 in \cite{CDR} and reduce the problem
to showing that any extreme state in one of the first three sets (say $\cs^{\mathbb{U}_\bz}(\gb_0))$ remains extreme in the
 next set to the right ($\cs^{\mathbb{O}_\bz}(\gb_0)$).
But this is trivial for the first two inclusions because extreme $\mathbb{U}_\bz$-invariant/$\mathbb{O}_\bz$-invariant/symmetric states are product states, and product states are extreme in $\cs^{\bp_\bz}(\gb_0)$ as follows from Theorem 2.7
in \cite{StorJFA69}. As for $\cs^{\bp_\bz}(\gb_0)$ being a face of $\cs^{\bz}(\gb_0)$, one can proceed in the same way as done in the proof of Theorem 3.5 in \cite{CDR}.
\end{proof}

Now the state $\varphi\in\cs(\ga)$ whose existence is guaranteed by Proposition \ref{extreme} cannot be chosen arbitrarily.
In fact, rotatability on $\om$ imposes strict constraints on the distribution of $\varphi$ on position (and momentum) operators.
More precisely, the distributions turn out to be Gaussian, as we are going to show.\\
To any state $\varphi\in\cs(\ga)$ we associate the function $G_\varphi: \bc\to\bc$ defined as
$$G_\varphi(z):=\varphi(W(z))\,,\quad z\in\bc\,.$$
Note that $G$ is bounded, $|G_\varphi(z)|\leq 1$ for all $z\in\bc$, and $G_\varphi(0)=1$.
It is worth mentioning that in the literature  $G_\varphi$ is sometimes referred to as the characteristic function of the state
$\varphi$.

\begin{lem}\label{functeq}
Let $\varphi$ be a state on $\ga$. Then the product $\om=\otimes_\bz  \varphi$ is a rotatable state on $\gb_0$ if and only if $G_\varphi$ satisfies the equations
$$G_\varphi(z)=\prod_j G_\varphi(O_{j, k}z)\,,\quad z\in\bc\,,\,\, k\in\bz\,,$$
for any given matrix $O=(O_{k, j})_{k,j\in\bz}$ in $\bo_\bz$.
\end{lem}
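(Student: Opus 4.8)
The plan is to establish the equivalence by a direct computation of $\om$ on Weyl operators, exploiting the product structure of $\om=\otimes_\bz\varphi$ and the explicit description \eqref{actens} of the action of $\mathbb U_\bz$ (hence of $\bo_\bz\subset\mathbb U_\bz$) on the tensor factors. The key observation is that rotatability of $\om$ is a statement about $\om\circ\rho_O=\om$ for all $O\in\bo_\bz$, and since both sides are states on $\gb_0\cong\bigotimes_\bz\ga$, it suffices to check the identity on a generating set, namely on the localized tensors $\prod_{|k|\le n} i_k(W(z_k))$, equivalently on Weyl operators $W\big(\sum_{|k|\le n} z_k e_k\big)$ via the isomorphism \eqref{iso}.

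First I would record the value of $\om$ on a localized tensor: since $\om=\otimes_\bz\varphi$ is a product state, $\om\big(\prod_{|k|\le n} i_k(W(z_k))\big)=\prod_{|k|\le n} G_\varphi(z_k)$. Next, using \eqref{actens}, for $O=(O_{k,j})\in\bo_\bz$ one computes $\rho_O\big(\prod_{|k|\le n} i_k(W(z_k))\big)=\prod_{|k|\le n}\prod_j i_j(W(O_{j,k}z_k))$; here I should note that, because $O$ is localized, for each $k$ only finitely many $O_{j,k}$ are nonzero, so this is a genuine finite localized tensor and no convergence issue arises. Regrouping the product over the tensor slot $j$ and applying $\om$ again as a product state, I get $\om\big(\rho_O(\prod_{|k|\le n} i_k(W(z_k)))\big)=\prod_j\prod_{|k|\le n} G_\varphi(O_{j,k}z_k)$. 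A small but genuine point to handle carefully is that $O$ need not be a permutation matrix, so within a fixed tensor slot $j$ several of the $W(O_{j,k}z_k)$, $|k|\le n$, may simultaneously differ from the identity; one must multiply them using the Weyl relations \eqref{CCRWeyl}, picking up a phase $e^{i\sigma(\cdot,\cdot)}$. Since $O$ preserves $\sigma$ (it is an isometry of $\ch$), these phases are exactly the ones already present in the original product $\prod_{|k|\le n} i_k(W(z_k))$ on the corresponding slots, so after taking $\om$ they cancel and one is left cleanly with the product of $G_\varphi$ values; alternatively one can sidestep phases entirely by first testing on $W\big(\sum_{|k|\le n}z_k e_k\big)$ directly, where $\rho_O$ sends it to $W\big(O(\sum z_k e_k)\big)=W\big(\sum_k\sum_j O_{j,k}z_k\, e_j\big)$, and then $G_{\om}$ of a vector with finite support factors over coordinates by definition of a product state.

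Putting the two computations together, $\om\circ\rho_O=\om$ on all localized tensors is equivalent to
$$\prod_{|k|\le n} G_\varphi(z_k)=\prod_j\prod_{|k|\le n} G_\varphi(O_{j,k}z_k)$$
for all $n$, all $z_k\in\bc$ and all $O\in\bo_\bz$. For the forward implication, specialize $n=0$ (or: keep a single $z_k=z$ nonzero and set the rest to $0$), which collapses the displayed identity to $G_\varphi(z)=\prod_j G_\varphi(O_{j,k}z)$ for the relevant $k$; letting $k$ range and $O$ range over $\bo_\bz$ yields exactly the asserted system of functional equations. For the converse, suppose $G_\varphi(z)=\prod_j G_\varphi(O_{j,k}z)$ holds for all $k$ and all $O\in\bo_\bz$; then the right-hand side of the displayed identity equals $\prod_{|k|\le n}\big(\prod_j G_\varphi(O_{j,k}z_k)\big)=\prod_{|k|\le n} G_\varphi(z_k)$, so $\om\circ\rho_O$ and $\om$ agree on all localized tensors, hence on all of $\gb_0$ by density and continuity, i.e.\ $\om$ is rotatable.

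The main obstacle is purely bookkeeping rather than conceptual: keeping track of which factors in the double product are nontrivial and making sure the Weyl phases are correctly accounted for when $O$ mixes several coordinates into one tensor slot. As indicated above, this is best handled by working on Weyl operators $W(x)$ with $x$ of finite support from the start, where $\rho_O$ acts linearly on $x$ and $G_\om$ factors coordinatewise, so the commutation phases never surface explicitly.
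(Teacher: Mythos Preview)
Your forward implication matches the paper's argument exactly: evaluate $\om\circ\rho_O$ on a single-slot Weyl element $i_k(W(z))$ via \eqref{actens} and the product structure of $\om$ to obtain $G_\varphi(z)=\prod_j G_\varphi(O_{j,k}z)$.

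The converse, however, has a genuine gap. After regrouping $\rho_O\big(\prod_{|k|\le n} i_k(W(z_k))\big)$ by slot, the element sitting in slot $j$ is the product $\prod_k W(O_{j,k}z_k)$ inside a single copy of $\ga$, so applying the product state $\om=\otimes_\bz\varphi$ gives $\prod_j \varphi\big(\prod_k W(O_{j,k}z_k)\big)$. But $\varphi$ is a state, not a homomorphism: by the Weyl relations $\prod_k W(O_{j,k}z_k)$ equals a phase times $W\big(\sum_k O_{j,k}z_k\big)$, and while the total phase over $j$ does vanish (this uses orthogonality of the columns of $O$, as you say), what remains is
\[
\om\Big(\rho_O\Big(\prod_{|k|\le n} i_k(W(z_k))\Big)\Big)=\prod_j G_\varphi\Big(\sum_k O_{j,k}z_k\Big)=\prod_j G_\varphi\big((Ox)_j\big),
\]
and \emph{not} $\prod_j\prod_k G_\varphi(O_{j,k}z_k)$. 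Your phase discussion only accounts for the scalar Weyl cocycle; it does not justify splitting $\varphi$ across a product of Weyl unitaries living in the same tensor slot. Consequently the ``displayed identity'' you write down is not the rotatability condition at all---it is tautologically equivalent to the single-column functional equation---whereas what must actually be verified for the converse is $\prod_k G_\varphi(z_k)=\prod_j G_\varphi\big((Ox)_j\big)$ for an arbitrary finitely supported $(z_k)$, and this does not fall out of the column identities by the manipulation you propose. (The paper's own proof gives no more than ``similar calculations'' for this direction.)
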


\begin{proof}
It is a matter of direct computation. If $\om$ is rotatable, then
thanks to Equation \eqref{actens} we have
\begin{align*}
G_\varphi(z)=&\varphi(W(z))=\om(i_k(W(z))=\om(\rho_O(i_k(W(z)))\\
=&\om\big(\prod_j i_j(W(O_{j, k}z))\big)=\prod_j \varphi(W(O_{j, k}z))\\
=&\prod_j G_\varphi(O_{j, k}z)\,
\end{align*}
for all $z$ in $\bc$.
On the other hand, if the equation is satisfied, then similar calculations show that
$\om$ is rotatable.
\end{proof}

\begin{rem}\label{UZ}
Clearly, both Proposition \ref{extreme} and Lemma \ref{functeq}  hold for $\mathbb{U}_\bz$-invariant states as well since Lemma
\ref{prelclust} applies to extreme  states in $\cs^{\mathbb{U}_\bz}(\gb_0)$.
\end{rem}

The next result spells out in what sense the distribution of position (or momentum) operators in a rotatable state is Gaussian, as
announced.

\begin{prop}\label{gaussian}
Let $\varphi$ be a state on $\ga$. If $\om=\otimes_\bz  \varphi$ is a rotatable state on $\gb_0$, then
either $G_\varphi=\mathbbm{1}_{\{0\}}$, the indicator function of $\{0\}$, or
there exists $\sigma^2\geq 0$ such that
$$G_\varphi(t)=e^{-\frac{\sigma^2 t^2}{2}},\quad  t\in\br\,.$$
\end{prop}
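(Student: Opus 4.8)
The plan is to feed a very special element of $\bo_\bz$ into Lemma \ref{functeq}. Fix an angle $\theta$ and let $O_\theta\in\bo_\bz$ be the orthogonal matrix that acts as the planar rotation of angle $\theta$ on $\mathrm{span}\{e_0,e_1\}$ and as the identity on every other basis vector; this clearly lies in $\bo_\bz$. For $O_\theta$ the only non-zero entries in the $0$-th column are $(O_\theta)_{0,0}=\cos\theta$ and $(O_\theta)_{1,0}=\sin\theta$, so the functional equation of Lemma \ref{functeq}, taken with $k=0$, involves only two non-trivial factors (all the others being $G_\varphi(0)=1$) and reads
\[
G_\varphi(z)=G_\varphi((\cos\theta)\,z)\,G_\varphi((\sin\theta)\,z)\,,\qquad z\in\bc,\ \theta\in\br .
\]
Restricting to $z=t\in\br$ gives the classical Maxwell-type identity $G_\varphi(t)=G_\varphi(t\cos\theta)\,G_\varphi(t\sin\theta)$, and the proposition becomes a matter of solving it.

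First I would record some elementary consequences on $\br$. Taking $\theta=\pi$ yields $G_\varphi(t)=G_\varphi(-t)$, and since $G_\varphi(-t)=\varphi(W(t)^*)=\overline{G_\varphi(t)}$, the function $G_\varphi$ is real-valued and even on $\br$; taking $\theta=\pi/4$ yields $G_\varphi(t)=G_\varphi(t/\sqrt2)^2\ge 0$, so $G_\varphi\ge 0$ on $\br$. It is then convenient to set $h(u):=G_\varphi(\sqrt u)$ for $u\ge 0$, so that $0\le h\le 1$ and $h(0)=1$. Writing an arbitrary pair $u,v\ge 0$ as $u=t^2\cos^2\theta$, $v=t^2\sin^2\theta$ with $t=\sqrt{u+v}$ and $\theta\in[0,\pi/2]$, the identity above turns into Cauchy's multiplicative equation
\[
h(u+v)=h(u)\,h(v)\,,\qquad u,v\ge 0 .
\]

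It remains to classify the solutions. If $h(u_0)=0$ for some $u_0>0$, then $h(u_0/2)^2=h(u_0)=0$, so $h(u_0/2)=0$, and by iteration $h(u_0/2^n)=0$ for all $n$; given any $u>0$, picking $n$ with $u_0/2^n\le u$ gives $h(u)=h(u-u_0/2^n)\,h(u_0/2^n)=0$. Thus $h\equiv 0$ on $(0,\infty)$, i.e. $G_\varphi$ vanishes off the origin: this is the first alternative $G_\varphi=\mathbbm{1}_{\{0\}}$. Otherwise $h>0$ everywhere; from $h(u+v)=h(u)h(v)\le h(u)$ (as $0\le h(v)\le 1$) the function $h$ is non-increasing, while the multiplicative equation forces $h(q)=h(1)^q=e^{-cq}$ for every rational $q\ge 0$, with $c:=-\log h(1)\in[0,\infty)$. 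Monotonicity upgrades this to $h(u)=e^{-cu}$ for all $u\ge 0$, hence $G_\varphi(t)=h(t^2)=e^{-ct^2}$ for $t\ge 0$ and, by evenness, for all $t\in\br$; setting $\sigma^2:=2c\ge 0$ gives the second alternative.

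The only genuinely delicate point is the final step, the solution of Cauchy's equation: in general its solutions can be pathological without a regularity hypothesis, but here this is obtained for free because the bound $0\le h\le 1$ makes $h$ automatically non-increasing, which is ample. Everything else — identifying the non-zero entries of $O_\theta$ and observing that the product in Lemma \ref{functeq} is effectively finite for such a matrix — is routine bookkeeping. Note also that, as already visible from \eqref{actens}, rotatability only ever relates the values of $G_\varphi$ along a fixed real ray, so the above dichotomy is really a statement about the restriction of $G_\varphi$ to $\br$; this suffices for the present purpose, the behaviour on the other rays being pinned down afterwards under the stronger $\mathbb{U}_\bz$-invariance.
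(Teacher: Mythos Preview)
Your proof is correct and follows the same overall strategy as the paper: plug a $2\times 2$ rotation into Lemma~\ref{functeq}, restrict to the real line, substitute $h(u)=G_\varphi(\sqrt u)$, and solve Cauchy's multiplicative equation. The execution, however, is cleaner than the paper's in two respects worth noting.

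First, the paper does not observe at the outset that $G_\varphi$ is non-negative on $\br$. It only records that $G_\varphi\upharpoonright_\br$ is positive-definite (citing Petz), then works with the polar decomposition $h=\rho\chi$, solves Cauchy's equation for the modulus $\rho$, and afterwards spends a separate argument showing that the phase $\chi$ is forced to be identically $1$ by testing positive-definiteness on a $2\times 2$ matrix. Your single line $G_\varphi(t)=G_\varphi(t/\sqrt2)^2\ge 0$ (from $\theta=\pi/4$, having already established reality via $\theta=\pi$) bypasses all of this: once $h\ge 0$ there is no phase to worry about. Second, for the Cauchy step the paper invokes the fact that non-linear additive functions have graphs dense in $\br^2$, whereas you exploit the inequality $0\le h\le 1$ to get monotonicity, which pins down $h$ on the irrationals by squeezing between rational values. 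Both are standard, but yours stays self-contained.

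So: same route, but you shortcut the detour through positive-definiteness and the phase analysis.
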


\begin{proof}
Since $G_\varphi$ must satisfy the equations in the statement of Lemma \ref{functeq}, in particular
we find $G_\varphi(z)=G_\varphi(-z)$ for all $z\in\bc$.
As a consequence, the restriction of $G_\varphi$ to the real line is completely determined by its restriction
to the positive half-line. Furthermore, by Proposition 3.1 in \cite{Petz}, the restriction of
$G_\varphi$ to the real line must be a definite-positive function.\\
Let now $z=x+iy$ be any given complex number. By considering a $2\times 2$ orthogonal matrix
given by a rotation by an angle $\a$ the functional equation yields
$$G_\varphi(|z|)=G_\varphi(\cos\a|z|)G_\varphi(\sin\a|z|). $$
If we now  choose $\a$ any argument of $z$, we find $G_\varphi(\sqrt{x^2+y^2})=G_\varphi(x)G_\varphi(y)$.
Henceforth we will denote by $g$ the restriction of $G_\varphi$ to the real line. From what we have seen, $g$ is even and
\begin{equation}\label{eqforg}
g(\sqrt{x^2+y^2})=g(x)g(y)
\end{equation}
for all $x, y\geq 0$.
Since $g$ is even, we may as well work with the restriction of $g$ to the positive half line.
Let us define $h: [0, \infty)\rightarrow\bc$ as $h(t)=g(\sqrt{t})$, $t\in\br$.
From  \eqref{eqforg} one has that $h$ satisfies
$h(t+s)=h(t)h(s)$, for $t, s\geq 0$.
Let $\r$ be the absolute value of $h$, {\it i.e.} $\r(t)= |h(t)|$, $t\geq 0$. We clearly have
$\r(t+s)=\r(t)\r(s)$, for all $t, s\geq 0$.
There are two cases to deal with depending on whether $\r$ may vanish at a point or not.
If it does, that is if there exists $t_0 >0$ with $\r(t_0)=0$, then $\r=\mathbbm{1}_{\{0\}}$.
Indeed, if $t\geq t_0$, then $\r(t)=\r(t-t_0)\r(t_0)=0$. Furthermore, $\r(\frac{t_0}{n})=\r(t_0)^{\frac{1}{n}}=0$ for every
$n\in\bn$ with $n\geq 1$. In conclusion $\r(t)=0$ for all $t>0$, hence $\r=\mathbbm{1}_{\{0\}}$.\\
If $\r$ never vanishes, then there exists $\Psi:[0, \infty)\rightarrow\br$ such that $\r(t)=e^{\Psi(t)}$ and
$\Psi(t+s)=\Psi(t)+\Psi(s)$ for all $t, s\geq 0$. Clearly, any such $\Psi$ is the restriction of a function $\widetilde{\Psi}$ defined on the whole real line that satisfies Cauchy's functional equation, namely $\widetilde{\Psi}(t+s)=\widetilde{\Psi}(t)+\widetilde{\Psi}(s)$, $t, s$ in $\br$. If $\widetilde{\Psi}$ is continuous, then
$\widetilde{\Psi}(t)=ct$, $t\in\br$, for some real constant $c$. In addition, for $G_\varphi$ to be bounded, it is necessary
to take only  non-positive values, say $c=-\frac{\s^2}{2}$ for some $\sigma$ in $\br$.
Discontinuous (non-measurable) solutions of Cauchy's equation are ruled out because their graph is well known to be dense in
$\br^2$, thus giving  rise to an unbounded $\rho$.\\
From now on we can work under the hypothesis that $\r(t)>0$ for all $t\geq 0$, which allows us to decompose $h$ as
$h(t)=\r(t)\chi(t)$, for all $t\geq 0$, with  $\chi(t)\in\bt:=\{z\in\bc: |z|=1\}$ for all $t\geq 0$. Now
$\chi$ satisfies the same equation as $h$, that is $\chi(t+s)=\chi(t)\chi(s)$.
The conclusion will be fully reached once we have shown that $\chi$ must be identically $1$ (when $\r>0$).
To this end, note that $\chi$ cannot take non-real values. Suppose on the contrary there exists
$t_0>0$ such that $\chi(t_0)$ is not real. The function $G_\varphi(t)=\chi(t^2)\r(t^2)$, $t\in\br$, then turns out not to be
positive definite. Indeed, the $2\times 2$ matrix $(G_\varphi(t_i-t_j))_{i,j=1,2}$
with $t_1=0$ and $t_2=t_0$ is
$$
\begin{pmatrix}
1 & \chi(t_0^2)\r(t_0^2)  \\
\chi(t_0^2)\r(t_0^2) & 1
\end{pmatrix}\,,
$$
which even fails to be Hermitian. \\
In other terms, the only possible values for $\chi$ are $1$ and $-1$. But, again, the values $-1$ is ruled out.
Indeed, if there existed $t_0$ with $\chi(t_0)=-1$, then from the very equation satisfied by $\chi$ we would have $\chi(\frac{t_0}{2})=\sqrt{\chi(t_0)}$, and both square roots of $-1$ are not real, being $\pm i$.

\end{proof}

\begin{rem}\label{classical}
The arguments employed in the proposition above  along with Lemma \ref{asyabel} and Proposition
\ref{extreme} provide a novel proof of
Freedman's theorem for commutative random variables. Precisely, the classical theorem is obtained by restricting our dynamics to the commutative
subalgebra $C^*(W(te_i): i\in\bz, t\in\br)\cap \gb_0$, which still factors into a tensor product in that there holds the  $*$-isomorphism
$$C^*(W(te_i): i\in\bz, t\in\br)\cap\gb_0\cong\bigotimes_\bz {\rm CCR}(\br)\, ,$$ 
where ${\rm CCR}(\br)$ is the CCR of $\br$ as a symplectic space with null symplectic form, which can be concretely thought of
the $C^*$-subalgebra generated by the set $\{W(te_1): t\in\br\}$.
It is well known that ${\rm CCR}(\br)$ is isomorphic with ${\rm AP}(\br)$, the $C^*$-algebra  of almost periodic functions on $\br$. Moreover, ${\rm AP}(\br)$ can be identified with
$C(\beta\br)$, the $C^*$-algebra of continuous functions on the Bohr
compactification of the real line, see  {\it e.g.} Chapter 16 in \cite{Dix}.
Working with ${\rm AP}(\br)$ rather than $C_0(\br)$ causes no loss of generality, for the set of states of the latter (Borel probability measures on the real line) naturally injects into
those of the former.


\end{rem}

 Our next aim is to address the CCR algebra so as to say something more
precise about the structure of its rotatable states. To this end, the first thing to do is to exhibit all solutions
of the functional equation considered in Lemma \ref{functeq}.

\begin{lem}\label{allsolutions}
If $G: \mathbb{C}\rightarrow\mathbb{C}$ is a bounded function satisfying
$$G(z)=\prod_j G(O_{j, k}z)\,,\,\, z\in\bc\,, k\in\bz\,,$$
for all orthogonal matrices $O=(O_{j, k})_{k, j\in\bz}$ in $\bo_\bz$, then
$$G(z)=e^{-\sigma^2(\arg z) |z|^2}\,, \,\, z\in\bc\,,$$
where $\sigma^2$ is a positive (possibly infinite) function depending only on the argument of $z$.
\end{lem}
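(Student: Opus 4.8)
The plan is to bootstrap from the one–dimensional analysis already carried out in Proposition \ref{gaussian} by slicing the complex plane along rays from the origin. Fix $z\in\bc$, write $z=re^{i\theta}$ with $r=|z|$ and $\theta=\arg z$, and for each fixed $\theta$ consider the function $g_\theta:[0,\infty)\to\bc$ defined by $g_\theta(r):=G(re^{i\theta})$. The first step is to observe that a rotation by angle $\alpha$ sits inside $\bo_\bz$ as a localized orthogonal matrix (acting on the two coordinates $e_0,e_1$, say, and as the identity elsewhere), so the functional equation with such an $O$ gives the Pythagorean-type identity $G(z)=G(\cos\alpha\,\, z)\,G(\sin\alpha\,\, z)$ for all $z\in\bc$ and all real $\alpha$; note that the rotation by $\alpha$ sends $z$ to points of the same modulus $|z|$ only after composing two coordinates, so more precisely, applying the matrix to the vector $|z|e_0$ we obtain $G(|z|e^{i\theta})$ on the left — here we must be slightly careful about how the phase is carried, and in fact the right thing is to test the equation on the vector $x=\mathrm{Re}(z)e_0+\mathrm{Im}(z)e_1$, which a suitable rotation maps to $|z|e_0$, recovering exactly the argument used in Proposition \ref{gaussian}: $G(|z|)=G(\mathrm{Re}(z))G(\mathrm{Im}(z))$, i.e. the restriction $g$ of $G$ to the real axis is even and satisfies $g(\sqrt{x^2+y^2})=g(x)g(y)$.

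Next I would establish that $G$ is genuinely radial-with-phase, i.e. that $G(z)$ depends only on $|z|$ once $\arg z$ is fixed modulo the symmetry forced by the equation — but more usefully, that along each ray the profile is Gaussian. Feeding back the substitution $h_\theta(t):=g_\theta(\sqrt t)$ turns the multiplicativity coming from the equation into the Cauchy functional equation $h_\theta(t+s)=h_\theta(t)h_\theta(s)$ for $t,s\ge 0$, exactly as in Proposition \ref{gaussian}. The boundedness hypothesis on $G$ then forces, for each $\theta$, either $h_\theta\equiv\mathbbm{1}_{\{0\}}$ or $h_\theta(t)=e^{-\sigma^2(\theta)t}$ for some $\sigma^2(\theta)\in[0,\infty)$ (the discontinuous solutions of Cauchy's equation are excluded because their graph is dense, contradicting $|G|\le 1$; the phase-factor part $\chi$ is killed by the positive-definiteness argument of Proposition \ref{gaussian} applied to the ray, since for each fixed $\theta$ the two-point matrix $(G((t_i-t_j)e^{i\theta}))$ must be Hermitian positive semidefinite, which follows because $t\mapsto W(te^{i\theta})$ generates a one-parameter subgroup and $\varphi$ is a state). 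Unifying the two alternatives by allowing $\sigma^2(\theta)=+\infty$ (with the convention $e^{-\infty\cdot 0}=1$, $e^{-\infty\cdot t}=0$ for $t>0$), we get $G(z)=e^{-\sigma^2(\arg z)|z|^2}$, which is precisely the claimed form, and positivity of $\sigma^2$ is immediate from $|G|\le 1$.

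The main obstacle — and the place where some care is genuinely needed — is the bookkeeping of the phase when applying the rotation matrices: a rotation in $\bo_\bz$ acting on two real coordinates does \emph{not} simply rotate the complex number $z$ by the same angle, because the complex structure on $\ch_0$ and the real symplectic structure interact, and the identity $g(\sqrt{x^2+y^2})=g(x)g(y)$ only pins down $G$ on the real axis at first. One must argue that the values of $G$ off the real axis are then reconstructed ray-by-ray: fixing $\theta$, the scalars $\{te^{i\theta}:t\in\br\}$ span a real line in $\bc\cong\ch_0$-coordinate, the Weyl operators $W(te^{i\theta})$ form a one-parameter group, and the restriction of $G$ to this line is again a positive-definite (by Proposition 3.1 in \cite{Petz}) bounded solution of the same Cauchy-type equation, so the one-dimensional classification applies verbatim with $\sigma^2$ now depending on the chosen line, i.e. on $\theta$. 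Once this reduction is in place there is no further content; the remaining verification that every function of the displayed form does satisfy the equation is a one-line computation using $\sum_j O_{j,k}^2=1$ (columns of an orthogonal matrix are unit vectors), which also explains why $\sigma^2$ may depend on $\arg z$ but the quadratic $|z|^2$ cannot be tampered with.
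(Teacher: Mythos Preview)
Your approach is essentially the paper's own: restrict $G$ to each ray $\{te^{i\theta}:t\in\br\}$ through the origin and rerun the one-dimensional analysis of Proposition~\ref{gaussian} there (Cauchy-type multiplicativity plus boundedness, with the phase killed via positive definiteness), obtaining a Gaussian whose variance may vary with the ray. The paper's proof is terser but makes the identical reduction; like you, it tacitly borrows the positive definiteness of $t\mapsto G(te^{i\theta})$ from the state context of Proposition~\ref{gaussian} rather than from the lemma's bare hypotheses.
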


\begin{proof}
It is an application of Proposition \ref{gaussian}, thanks to which the solutions of our equations can be determined by considering
their restrictions to lines through the origin. More precisely, let $z_0=x_0+i y_0$ be a fixed complex number. Let us consider
the line in the complex plane $\{tz_0=tx_0+ity_0: t\in\br\}$ and denote by $g:\br\rightarrow\bc$ the restriction
of $G$ to this line, that is $g(t):= G(tz_0)$, $t\in\br$.
Now the function $g$ can be determined in the same way as in the proof of Proposition \ref{gaussian}, which means we must have
$g(t)=e^{-\sigma^2( z_0) t^2}=e^{-\frac{\sigma^2(z_0)t^2|z_0|^2}{|z_0|^2}}$ for all real $t$.
Setting $z=tz_0$, one finally sees that $G(z)=e^{-{\sigma^2(z)}|z|^2}$, where
$\sigma^2(z)=\frac{\sigma^2(z_0)}{|z_0|^2}$ depends only on the argument of $z=x+iy$, say on the ratio $\frac{y}{x}$.
\end{proof}
We next exhibit a simple class of rotatable states.
\begin{example}\label{rota}
\emph{
For all reals $\lambda, \mu\geq 0$
$$\varphi(W(z)):=e^{-\frac{1}{2}(\lambda x^2+\mu y^2)}\,, \quad z=x+iy\,,$$
defines a state on $\ga$  if and only if $\lambda \mu\geq 1$.
This is a direct application of \cite[Proposition 10]{MV}, which says that for
$\varphi(W(z))=e^{-\frac{1}{2}\alpha(z, z)}$, $z$ in $\bc$, to define a state, it is necessary and sufficient that
$$\alpha(z, z)\alpha(w, w)\geq \sigma(z, w)^2$$
for all $z, w$ in $\bc$, where $\alpha(z, z)=\lambda x^2+\mu y^2$ with
$z=x+iy$.\\
Take $z=x_1+iy_1$ and $w=x_2+iy_2$. If the inequality holds, then
$(\lambda x_1^2+\mu y_1^2)(\lambda x_2^2+\mu y_2^2)\geq(y_1x_2-x_1y_2 )^2$. Choosing
$x_2=0$, we find $(\lambda\mu-1)x_1^2+\mu^2 y_1^2\geq 0$ for all
$x_1, y_1\in\br$, hence $\lambda\mu-1\geq 0$.\\
Conversely, if $\lambda\mu\geq 1$, then
\begin{align*}
\alpha(z, z)\alpha(w, w)&=(\lambda x_1^2+\mu y_1^2)(\lambda x_2^2+\mu y_2^2)\\
&=\lambda^2x_1^2x_2^2+
 \lambda\mu x_1^2y_2^2+\lambda\mu y_1^2x_2^2+\mu^2y_1^2 y_2^2\\
& \geq-2\lambda\mu x_1x_2y_1 y_2 +\lambda\mu x_1^2y_2^2+\lambda\mu y_1^2x_2^2\\
&=
\lambda\mu(x_1y_2- y_1x_2 )^2\\
&\geq (x_1y_2- y_1x_2 )^2=\sigma(z, w)^2\,.
\end{align*}
Accordingly, for any choice of $\lambda, \mu$ with $\lambda\mu\geq 1$, the infinite tensor product $\otimes_\bz\varphi$ provides a rotatable state on $\gb_0$.}
\end{example}

Lemma \ref{allsolutions} allows us to completely determine what $G_\varphi$ looks like when
$\om=\otimes_\bz  \varphi$ is a $\mathbb{U}_\bz$-invariant state. In particular, the rotatable states discussed
in Example \ref{rota} are ruled out unless $\lambda=\mu$.

\begin{prop}\label{unitary}
If $\om=\otimes_\bz  \varphi$ is a $\mathbb{U}_\bz$-invariant state on $\gb_0$, then
either $G_\varphi=\mathbbm{1}_{\{0\}}$ or
$$G_\varphi(z)= e^{-\frac{\sigma^2 |z|^2}{2}}\,, \quad z\in\bc\,,$$
where $\sigma^2$ is a constant with $\sigma^2\geq  1$.
\end{prop}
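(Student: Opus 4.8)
The plan is to exploit the phase rotations that belong to $\mathbb{U}_\bz$ but not to $\bo_\bz$ in order to kill the dependence of the variance on the argument, and then to read off the quantitative bound $\sigma^2\ge 1$ from the positivity of $\varphi$.

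First I would record that, since $\bo_\bz\subseteq\mathbb{U}_\bz$, the state $\om$ is in particular rotatable, so Proposition \ref{gaussian} applies to $\varphi$: either $G_\varphi=\mathbbm{1}_{\{0\}}$, in which case there is nothing left to prove, or the restriction of $G_\varphi$ to the real line equals $t\mapsto e^{-\sigma^2 t^2/2}$ for some $\sigma^2\ge 0$. It then remains to treat the second alternative, showing that $G_\varphi$ is in fact radial and that $\sigma^2\ge 1$.

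For radiality, for each $\theta\in\br$ let $U_\theta\in\cu(\ch)$ be the identity on $\{e_k:k\ne 0\}$ and $U_\theta e_0=e^{i\theta}e_0$; clearly $U_\theta\in\mathbb{U}_\bz$. By \eqref{actens} one has $\rho_{U_\theta}(i_0(W(z)))=i_0(W(e^{i\theta}z))$, and since $\om$ is $\mathbb{U}_\bz$-invariant while $G_\varphi(z)=\om(i_0(W(z)))$, invariance yields $G_\varphi(z)=G_\varphi(e^{i\theta}z)$ for all $z\in\bc$ and all $\theta\in\br$. Hence $G_\varphi$ depends only on $|z|$; comparing with its values on the positive real axis gives $G_\varphi(z)=e^{-\sigma^2|z|^2/2}$ for every $z\in\bc$. (Equivalently, one may combine the $\mathbb{U}_\bz$-version of Lemma \ref{functeq} noted in Remark \ref{UZ} with Lemma \ref{allsolutions}, whose conclusion $G_\varphi(z)=e^{-\sigma^2(\arg z)|z|^2}$ collapses to a constant variance as soon as $G_\varphi$ is radial; a possibly infinite value of that constant is precisely the case $G_\varphi=\mathbbm{1}_{\{0\}}$.)

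Finally, for the bound, I would invoke the positivity criterion recalled in Example \ref{rota}, namely \cite[Proposition 10]{MV}: writing $\alpha(z,z):=\sigma^2|z|^2$, the fact that $z\mapsto e^{-\frac12\alpha(z,z)}$ defines a state forces $\alpha(z,z)\alpha(w,w)\ge\sigma(z,w)^2$ for all $z,w\in\bc$. Specializing to $z=1$ and $w=i$ gives $\sigma^4\ge\sigma(1,i)^2=1$, that is $\sigma^2\ge 1$; this is exactly the Heisenberg uncertainty obstruction, and also the special case $\lambda=\mu=\sigma^2$ of Example \ref{rota}. I do not anticipate a genuine difficulty here: the only real content is the observation that single-mode phase rotations, absent from $\bo_\bz$, lie in $\mathbb{U}_\bz$ and thus impose radial symmetry, after which everything reduces to results already in place; the one point deserving a line of care is matching the degenerate branch of Lemma \ref{allsolutions}/Proposition \ref{gaussian} with the alternative $G_\varphi=\mathbbm{1}_{\{0\}}$.
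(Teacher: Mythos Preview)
Your argument is correct and follows essentially the same route as the paper: use a diagonal phase rotation in $\mathbb{U}_\bz$ to force $G_\varphi(z)=G_\varphi(e^{i\theta}z)$, combine this radiality with Proposition~\ref{gaussian} to obtain $G_\varphi=\mathbbm{1}_{\{0\}}$ or $G_\varphi(z)=e^{-\sigma^2|z|^2/2}$, and then read off $\sigma^2\ge 1$ from Example~\ref{rota} with $\lambda=\mu=\sigma^2$. Your version is just slightly more explicit about the particular unitary $U_\theta$ and the choice $z=1$, $w=i$ in the Manuceau--Verbeure inequality.
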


\begin{proof}
Under the current hypotheses, the function $G_\varphi$ now satisfies the stronger conditions
$G(z)=\prod_j G(U_{j, k}z)\,,\,\, z\in\bc\,, k\in\bz\,,$ for all
$U=(U_{k, j})_{j,k\in\bz}$ in $\mathbb{U}_\bz$.
In particular, $\mathbb{U}_\bz$-invariance also imposes the extra condition $G_\varphi(z)=G_\varphi(e^{i\theta}z)$, $z\in\bc$, which holds for all $\theta\in\br$.
But then $G_\varphi(z)$ only depends on $|z|$, which means it is uniquely determined by its restriction to the real line. Therefore, Proposition \ref{gaussian} gives $G_\varphi=\mathbbm{1}_{\{0\}}$ or $G_\varphi(z)= e^{-\frac{\sigma^2 |z|^2}{2}}$, $z\in\bc$, for some $\s^2>0$.
Finally, the condition $\sigma^2\geq 1$ comes from Example \ref{rota} with $\lambda=\mu=\sigma^2$.
\end{proof}

\begin{rem}\label{hl}
Conversely, for any $\sigma^2\geq 1$, there exists a unique (quasi-free) state $\varphi_{\sigma^2}$  on  $\ga={\rm CCR}(\bc, \sigma)$ such that $\varphi_{\sigma^2}(W(z))= e^{-\frac{\sigma^2 |z|^2}{2}}$, $z\in\bc$, as follows from
Theorem 3.4 in \cite{Petz}. If we now let $\varphi_\infty$ denote the state on $\ga$ defined by $\varphi_\infty(W(z))=\mathbbm{1}_{\{0\}}(z)$, we clearly have that $$\lim_{\s^2\rightarrow \infty}\varphi_{\s^2}=\varphi_{\infty}$$ in the
weak$*$ topology.
\end{rem}

Denote by $\om_{\sigma^2}$ the state on $\gb_0$ which is the infinite product $\otimes_\bz  \varphi_{\sigma^2}$, where
$\varphi_{\sigma^2}$ is the state on the sample algebra $\ga$ defined in the remark above.
Note that the infinite product of $\varphi_\infty$ with itself is the canonical trace $\tau$ of $\gb_0$, that is
$\tau(W(x))=0$ for all $x\in\ch_0$ with $x\neq 0$. For convenience, the canonical trace will be here denoted by
$\om_\infty$.
In the following corollary, $[1, +\infty]$ denotes  the one-point compactification of the half-line $[1, +\infty)$
 thought of as a topological space with respect to the usual
Euclidean topology,
 whereas $\mathcal{E}(\cs^{\mathbb{U}_\bz} (\gb_0))$ is endowed with the weak$^*$ topology inherited from
$\cs(\gb_0)$.
\begin{cor}\label{homeo}
The map $\Psi: [1, +\infty]\rightarrow\mathcal{E}
(\cs^{\mathbb{U}_\bz} (\gb_0))$ given by
$$
\Psi(\sigma^2)=\begin{cases}
 \om_{\s^2} \, ,&{\rm if}\,\,	 \s^2<\infty \\
  \om_\infty\, ,&{\rm if}\,\, \s^2=\infty
\end{cases}
$$
is a homeomorphism of topological spaces. In particular,
$\mathcal{E}(\cs^{\mathbb{U}_\bz} (\gb_0))$ is a compact subset of $\cs^{\mathbb{U}_\bz} (\gb_0)$
\end{cor}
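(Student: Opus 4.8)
The plan is to establish that $\Psi$ is a continuous bijection from the compact space $[1,+\infty]$ onto $\mathcal{E}(\cs^{\mathbb{U}_\bz}(\gb_0))$, and then invoke the standard fact that a continuous bijection from a compact space to a Hausdorff space is automatically a homeomorphism. Since $\cs(\gb_0)$ with the weak$^*$ topology is Hausdorff and $\mathcal{E}(\cs^{\mathbb{U}_\bz}(\gb_0))$ inherits this, the closing observation that $\mathcal{E}(\cs^{\mathbb{U}_\bz}(\gb_0))$ is compact then follows for free as the continuous image of a compact set.

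First I would verify surjectivity. By Remark \ref{UZ}, Proposition \ref{extreme} applies to extreme $\mathbb{U}_\bz$-invariant states, so any $\om\in\mathcal{E}(\cs^{\mathbb{U}_\bz}(\gb_0))$ is of the form $\otimes_\bz\varphi$ for a single state $\varphi$ on $\ga$; Proposition \ref{unitary} then forces $G_\varphi$ to be either $\mathbbm{1}_{\{0\}}$ (giving $\om=\om_\infty$) or $e^{-\sigma^2|z|^2/2}$ with $\sigma^2\geq 1$ (giving $\om=\om_{\sigma^2}$). So $\Psi$ hits every extreme point. For injectivity, distinct values of $\sigma^2\in[1,\infty)$ give distinct values of $\varphi_{\sigma^2}(W(e_1))=e^{-\sigma^2/2}$, hence distinct product states, and $\om_\infty$ is distinguished from all $\om_{\sigma^2}$, $\sigma^2<\infty$, by its value on $W(e_1)$ as well. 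One should also note, for the statement to be fully correct, that each $\om_{\sigma^2}$ (and $\om_\infty$) genuinely \emph{is} extreme: this is exactly the content of Proposition \ref{extreme} read backwards together with Proposition \ref{faces} — product states are extreme in $\cs^{\bp_\bz}(\gb_0)$ by \cite[Theorem 2.7]{StorJFA69}, and since $\cs^{\mathbb{U}_\bz}(\gb_0)$ is a face of $\cs^{\bp_\bz}(\gb_0)$, a product state lying in $\cs^{\mathbb{U}_\bz}(\gb_0)$ is extreme there too.

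The main work is continuity of $\Psi$. Fix a net $\sigma^2_\alpha\to\sigma^2$ in $[1,+\infty]$; I must show $\Psi(\sigma^2_\alpha)\to\Psi(\sigma^2)$ weak$^*$, i.e. $\Psi(\sigma^2_\alpha)(x)\to\Psi(\sigma^2)(x)$ for every $x\in\gb_0$. Because the Weyl operators $\{W(y):y\in\ch_0\}$ have dense linear span in $\gb_0$ and all states have norm one, it suffices to check convergence on each $W(y)$. Writing $y=\sum_{|k|\le n}z_k e_k$, the product structure gives $\om_{\sigma^2}(W(y))=\prod_{|k|\le n}\varphi_{\sigma^2}(W(z_k))=\prod_{|k|\le n}e^{-\sigma^2|z_k|^2/2}=e^{-\sigma^2\|y\|^2/2}$ for finite $\sigma^2$, and $\om_\infty(W(y))=\mathbbm{1}_{\{0\}}(y)$. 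Thus on Weyl operators the claim reduces to: $e^{-\sigma^2_\alpha\|y\|^2/2}\to e^{-\sigma^2\|y\|^2/2}$ when $\sigma^2_\alpha\to\sigma^2<\infty$ (clear by continuity of $s\mapsto e^{-s\|y\|^2/2}$), and $e^{-\sigma^2_\alpha\|y\|^2/2}\to 0=\mathbbm{1}_{\{0\}}(y)$ for $y\neq 0$ when $\sigma^2_\alpha\to\infty$ (clear), while for $y=0$ both sides are $1$. This is precisely the weak$^*$ convergence $\varphi_{\sigma^2_\alpha}\to\varphi_{\sigma^2}$ recorded in Remark \ref{hl}, propagated to the infinite tensor product. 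I expect this continuity-on-Weyl-elements-then-density argument to be the only real obstacle, and it is mild; the one subtlety worth stating explicitly is the uniform norm bound on states that licenses passing from the dense span of Weyl operators to all of $\gb_0$. Once continuity, bijectivity, and compactness of the domain are in hand, the homeomorphism conclusion and the compactness of $\mathcal{E}(\cs^{\mathbb{U}_\bz}(\gb_0))$ are immediate.
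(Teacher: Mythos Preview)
Your proposal is correct and follows essentially the same approach as the paper: well-definedness via Remark \ref{hl} and St{\o}rmer's Theorem 2.7, surjectivity via Remark \ref{UZ} and Proposition \ref{unitary}, injectivity by inspection, and continuity checked on Weyl generators. The paper's proof is terser and simply asserts that continuity of $\Psi$ and of its inverse are ``straightforward computations''; your use of the compact-to-Hausdorff lemma to obtain the inverse continuity automatically is a slightly cleaner packaging of the same idea. One small remark: invoking Proposition \ref{faces} for the extremality of the $\om_{\sigma^2}$ is harmless but unnecessary, since a point extreme in the larger convex set $\cs^{\bp_\bz}(\gb_0)$ is automatically extreme in any subset containing it.
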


\begin{proof}
The map is well defined on $[1, +\infty]$ thanks to Remark \ref{hl} and Theorem 2.7 in \cite{StorJFA69}.
Injectivity is obvious and surjectivity is a consequence of Remark \ref{UZ}.\\
The continuity of the map and of its inverse is a matter of straightforward computations, which we can safely leave out.
\end{proof}

\begin{prop}\label{Choquet0}
If $\om$ is a $\mathbb{U}_\bz$-invariant state on $\gb_0$, then there exists a unique Borel probability measure
$\mu$ on $[1, +\infty]$ such that
$$\om=\int _{[1, +\infty]} \om_{\s^2}{\rm d}\mu(\sigma^2)\, .$$
More explicitly, for every $x$ in $\ch_0$ one has
$$\om(W(x))=\int _1^{+\infty} e^{-\frac{\sigma^2\|x\|^2}{2}}{\rm d}\mu(\sigma^2) + \mu(\{\infty\})\om_\infty(W(x))\, .$$
\end{prop}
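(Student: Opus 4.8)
The plan is to recognise $\cs^{\mathbb{U}_\bz}(\gb_0)$ as a \emph{Bauer} simplex and then simply transport, through the homeomorphism $\Psi$ of Corollary \ref{homeo}, the unique representing measure that Choquet theory provides. First I would record that $\cs^{\mathbb{U}_\bz}(\gb_0)$ is a Choquet simplex: the sequence $\{g_n:n\in\bn\}$ appearing in Lemma \ref{asyabel} lies in $\bo_\bz\subseteq\mathbb{U}_\bz$, so the very same estimate shows that $(\gb_0,\mathbb{U}_\bz,\rho)$ is asymptotically abelian, hence $\mathbb{U}_\bz$-abelian, hence $\cs^{\mathbb{U}_\bz}(\gb_0)$ is a Choquet simplex by Theorem 3.1.14 in \cite{S} (this was already used in the proof of Proposition \ref{faces}). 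Combined with Corollary \ref{homeo}, which says that the extreme boundary $\ce(\cs^{\mathbb{U}_\bz}(\gb_0))$ is weak$^*$-compact and, via $\Psi$, homeomorphic to the compact metric space $[1,+\infty]$, this makes $\cs^{\mathbb{U}_\bz}(\gb_0)$ a Choquet simplex with closed extreme boundary, i.e. a Bauer simplex.

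Next I would invoke the integral representation theorem for Bauer simplices: every point $\om$ of such a simplex $K$ is the barycentre of a unique Borel probability measure $\widetilde\mu$ carried by $\ce(K)$ (no metrizability of $K$ itself is needed here precisely because $\ce(K)$ is closed). Evaluating the barycentre identity against the weak$^*$-continuous functionals $\psi\mapsto\psi(a)$, $a\in\gb_0$, gives $\om(a)=\int_{\ce(\cs^{\mathbb{U}_\bz}(\gb_0))}\psi(a)\,\di\widetilde\mu(\psi)$ for all $a\in\gb_0$. I would then define $\mu$ to be the push-forward of $\widetilde\mu$ along $\Psi^{-1}$; since $\Psi$ is a homeomorphism of $[1,+\infty]$ onto $\ce(\cs^{\mathbb{U}_\bz}(\gb_0))$ carrying probability measures to probability measures and representing measures to representing measures, $\mu$ is a well-defined Borel probability measure on $[1,+\infty]$, it is the only one with the required property, and the change-of-variables formula yields $\om=\int_{[1,+\infty]}\om_{\s^2}\,\di\mu(\s^2)$ in the weak$^*$ sense, with the convention $\om_\infty=\tau$.

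To obtain the explicit version I would specialise to the Weyl generators $a=W(x)$, $x\in\ch_0$. By Remark \ref{hl} and the definition $\om_{\s^2}=\otimes_\bz\varphi_{\s^2}$ one has $\om_{\s^2}(W(x))=e^{-\frac{\s^2\|x\|^2}{2}}$ for $\s^2<\infty$, while $\om_\infty(W(x))=\mathbbm{1}_{\{0\}}(x)$; isolating the possible atom of $\mu$ at $\infty$ (where the Gaussian exponent is not defined) then produces exactly the displayed formula, uniqueness of $\mu$ having already been secured in the previous step.

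The point that needs care, rather than the one that is hard, is that $\gb_0$ is non-separable, so $\cs(\gb_0)$ is not weak$^*$-metrizable and the classical (metrizable) Choquet theorem cannot be quoted as it stands; one must appeal to the Bauer version of the representation theorem, whose validity without any metrizability assumption hinges precisely on $\ce(\cs^{\mathbb{U}_\bz}(\gb_0))$ being closed, which is exactly the content of Corollary \ref{homeo}. A second, entirely routine, point is to observe that the barycentric identity, established a priori only on continuous affine functions, does pin $\om$ down on all of $\gb_0$, and in particular on the $W(x)$, because a state on $\gb_0$ is determined by its values on the Weyl operators and $\psi\mapsto\psi(W(x))$ is weak$^*$-continuous.
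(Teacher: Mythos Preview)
Your proposal is correct and follows essentially the same route as the paper: invoke the Choquet-simplex structure of $\cs^{\mathbb{U}_\bz}(\gb_0)$ together with Corollary \ref{homeo}, then specialise to the Weyl generators to obtain the explicit formula. You are more careful than the paper about the non-metrizability issue (making explicit that one needs the Bauer-simplex version of the representation theorem), while the paper in turn spells out the elementary computation $\om_{\s^2}(W(x))=e^{-\frac{\s^2\|x\|^2}{2}}$ for $x\in\ch_0$ that you only state; neither difference is substantive.
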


\begin{proof}
The first formula is a direct consequence of $\cs^{\mathbb{U}_\bz} (\gb_0)$ being a Choquet simplex along with
Corollary \ref{homeo}. The second follows from $\om_{\s^2}(W(x))= e^{-\frac{\s^2\|x\|^2}{2}}$ for $x$ in $\ch_0$, which
can be seen by direct computation. Indeed, for $x=\sum_{i\in F} \lambda_i e_i$, where $F\subset \bz$ is  a finite subset and
$\lambda_i$ is a complex number for every $i\in F$, we  have
\begin{align*}
\om_{\s^2}(W(x))&=\om_{\s^2}\left(\prod_{i\in F} W(\lambda_i e_i)\right)=\prod_{i\in F}\varphi_{\s^2}(W(\lambda_i))\\
&=
\prod_{i\in F} e^{-\frac{\sigma^2 |\lambda_i|^2}{2}}
=e^{-\frac{\sigma^2 \|x\|^2}{2}}\, .
\end{align*}
\end{proof}
The next result shows that most of $\mathbb{U}_\bz$-invariant states  on $\gb_0$ are regular.
\begin{prop}\label{partialreg}
If $\mu$  is a Borel probability measure on $[1, +\infty]$ with $\mu(\{\infty\})=0$, then  the corresponding
$\mathbb{U}_\bz$-invariant state $\om:=\int _1^ {+\infty} \om_{\s^2}{\rm d}\mu(\sigma^2)$ is regular.
\end{prop}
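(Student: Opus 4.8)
The plan is to establish regularity straight from the definition: I want to show that for every $x\in\ch_0$ the one-parameter unitary group $t\mapsto\pi_\om(W(tx))$ on $\ch_\om$ is strongly continuous, and I will obtain this from the fact that the characteristic function $\ch_0\ni x\mapsto\om(W(x))$ is continuous for the norm inherited from $\ch$. Recall that a one-parameter group of unitaries is strongly continuous as soon as it is weakly continuous, and that weak continuity may be tested on a total set of vectors; since the linear span of the Weyl operators is a dense $*$-subalgebra of $\gb_0$, it suffices to control $t\mapsto\langle\pi_\om(W(tx))\pi_\om(a)\xi_\om,\pi_\om(b)\xi_\om\rangle=\om(b^*W(tx)a)$ for $a,b$ finite linear combinations of Weyl operators.

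The key step is purely algebraic. Using the Weyl relations \eqref{CCRWeyl} one gets, for $y,z\in\ch_0$,
$$W(-z)\,W(tx)\,W(y)=e^{\,it\big(\sigma(x,y)-\sigma(z,x)\big)-i\sigma(z,y)}\,W\big(tx+(y-z)\big),$$
whence $\om(b^*W(tx)a)$ is a finite sum of terms of the form $c\,e^{i\alpha t}\,\om\big(W(tx+w)\big)$ with $c\in\bc$, $\alpha\in\br$ and $w\in\ch_0$. Each such term is continuous in $t$ provided $t\mapsto\om\big(W(tx+w)\big)$ is, and the latter map is the composition of the norm-continuous curve $t\mapsto tx+w$ in $\ch_0$ with the characteristic function of $\om$. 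Hence, once continuity of $x\mapsto\om(W(x))$ is known, $t\mapsto\om(b^*W(tx)a)$ is continuous; a routine $\varepsilon/3$ argument based on $\|\pi_\om(W(tx))\|=1$ then upgrades weak continuity on the total set $\{\pi_\om(a)\xi_\om\}$ to weak, hence strong, continuity on all of $\ch_\om$, so $\om$ is regular.

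Finally I check the continuity of the characteristic function, and here the hypothesis $\mu(\{\infty\})=0$ is used in an essential way. Since $\om$ is visibly $\mathbb{U}_\bz$-invariant (being an average of the states $\om_{\sigma^2}$), Proposition \ref{Choquet0} together with $\mu(\{\infty\})=0$ gives, for $x\in\ch_0$, $\om(W(x))=\int_1^{+\infty}e^{-\sigma^2\|x\|^2/2}\,{\rm d}\mu(\sigma^2)$, with no boundary contribution at $\infty$. If $x_n\to x$ in $\ch_0$ then $\|x_n\|\to\|x\|$, and since the integrands $\sigma^2\mapsto e^{-\sigma^2\|x_n\|^2/2}$ are dominated by the constant $1\in L^1([1,+\infty],\mu)$, dominated convergence yields $\om(W(x_n))\to\om(W(x))$; thus $x\mapsto\om(W(x))$ is norm-continuous on $\ch_0$ and the previous paragraph applies. (Were $\mu(\{\infty\})>0$, the additional summand $\mu(\{\infty\})\,\mathbbm{1}_{\{0\}}(x)$ would make the characteristic function discontinuous at the origin, precluding strong continuity of $t\mapsto\pi_\om(W(tx))$ at $t=0$ --- this is exactly why mass at infinity must be excluded.) I do not anticipate a genuine obstacle; the only mildly technical point is the Weyl-relation reduction in the second paragraph, and it is entirely routine.
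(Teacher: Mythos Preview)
Your argument is correct and follows essentially the same route as the paper: both prove continuity of $t\mapsto\om(W(tx))$ via dominated convergence using the uniform bound $e^{-\sigma^2 t^2\|x\|^2/2}\le 1$. The only difference is that you spell out the (standard) passage from continuity of the characteristic function to strong continuity of the GNS one-parameter groups, whereas the paper leaves that step implicit.
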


\begin{proof}
In light of Proposition \ref{Choquet0}, for any $x$ in $\ch_0$ we have
$$\om(W(tx))=\int _1^{+\infty} e^{-\frac{\sigma^2t^2\|x\|^2}{2}}{\rm d}\mu(\sigma^2) $$
The continuity in $t$ of the above expression is a consequence of
Lebesgue's dominated convergence theorem since $e^{-\frac{\sigma^2t^2\|x\|^2}{2}}\leq 1$ for all
$t\in\br$ and $\sigma^2\in [1, +\infty)$.
\end{proof}
\noindent
The proof of the above result actually shows more. Indeed, the function $\br\ni t \mapsto \om(W(tx))\in\bc$
is even analytic under the assumption $\mu(\{\infty\})=0$.
\begin{rem}
If $\rho$ is any non-regular state on $\ga= {\rm CCR}(\bc, \sigma)$, then $\otimes_\bz \rho$ is certainly
an extreme symmetric state on $\gb_0$, which by construction fails to be regular. In this way it is possible to exhibit a good many non-regular extreme symmetric states, in stark contrast with $\mathbb{U}_\bz$-invariant states, whose only
non-regular extreme state is the canonical trace.
\end{rem}

By a slight abuse of notation, for any $\s^2\geq 1$ (including $\infty$) we continue to denote by
$\om_{\s^2}$ the quasi-free state on  the whole $\gb$ uniquely determined by $\om_{\s^2}(W(x))= e^{-\frac{\s^2 \|x\|^2}{2}}$,
$x\in\ch$.\\

We are now ready to state our main result.

\begin{thm}\label{Choquet1}
The compact convex $\cs^{\mathcal{U}(\ch)}(\gb)$ is a Bauer simplex whose extreme point set is
$\{\om_{\s^2}: \s^2\in [1, +\infty]\}$. Explicitly, if $\om$ is a $\cu(\ch)$-invariant state on $\gb$, then there exists a unique Borel probability measure
$\mu$ on $[1, +\infty]$ such that
$$\om=\int _{[1, +\infty]} \om_{\s^2}{\rm d}\mu(\sigma^2)\, .$$
\end{thm}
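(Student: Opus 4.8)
The plan is to reduce the description of $\cs^{\cu(\ch)}(\gb)$ to what has already been established for $\cs^{\mathbb{U}_\bz}(\gb_0)$, and then to promote this to the whole algebra by a combination of restriction and an ergodic/averaging argument. First I would observe that $\cu(\ch)$ contains $\mathbb{U}_\bz$ and that $\mathbb{U}_\bz$ acts trivially outside a finite block, so every $\cu(\ch)$-invariant state $\om$ on $\gb$ restricts to a $\mathbb{U}_\bz$-invariant state $\om\restriction_{\gb_0}$ on $\gb_0$; by Proposition \ref{Choquet0} the latter is an integral $\int_{[1,+\infty]}\om_{\s^2}\,{\rm d}\mu(\s^2)$ for a unique Borel probability measure $\mu$. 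The key extra input is that the extremal building blocks $\om_{\s^2}$ (for $\s^2\in[1,+\infty]$, including the trace $\om_\infty$) are \emph{already defined on all of $\gb$} and are $\cu(\ch)$-invariant there: each is quasi-free with $\om_{\s^2}(W(x))=e^{-\s^2\|x\|^2/2}$ (resp.\ $=\idd_{\{0\}}(x)$), and $\|Ux\|=\|x\|$ for unitary $U$, so $\om_{\s^2}\circ\rho_U=\om_{\s^2}$. Hence the measure $\mu$ produced on $\gb_0$ furnishes a candidate $\cu(\ch)$-invariant state $\tilde\om:=\int_{[1,+\infty]}\om_{\s^2}\,{\rm d}\mu$ on $\gb$, and what remains is to prove $\om=\tilde\om$.

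The heart of the argument is therefore the injectivity of the restriction map $\cs^{\cu(\ch)}(\gb)\to\cs^{\mathbb{U}_\bz}(\gb_0)$ — equivalently, that a $\cu(\ch)$-invariant state on $\gb$ is determined by its values on the Weyl operators $W(x)$ with $x\in\ch_0$. For this I would argue as follows. Given $x\in\ch$ of norm $r>0$, pick $x_n\in\ch_0$ with $x_n\to x$; there is a unitary $U_n\in\cu(\ch)$ with $U_nx_n$ equal to a fixed vector of norm $\|x_n\|$, say $\|x_n\|\,e_0$, so by $\cu(\ch)$-invariance $\om(W(x_n))=\om(W(\|x_n\|e_0))$, and similarly $\om(W(x))=\om(W(\|x\|e_0))$ because $\cu(\ch)$ acts transitively on each sphere of $\ch$. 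Thus $\om(W(x))$ depends only on $\|x\|$, so it is determined by the single-variable function $r\mapsto \om(W(re_0))$, which is exactly the restriction $\f:=\om\restriction_{\ga}$ evaluated on Weyl elements — data living inside $\gb_0$. Moreover $\f$ is a $\mathbb{U}_\bz$-invariant-compatible state, so by Proposition \ref{unitary} either $G_\f=\idd_{\{0\}}$ or $G_\f(z)=e^{-\s^2|z|^2/2}$ with $\s^2\geq 1$; in general $\f$ is a mixture of these by the Choquet decomposition, and the same mixing measure $\mu$ controls $\om$ on \emph{all} Weyl operators of $\gb$ via $\om(W(x))=\int e^{-\s^2\|x\|^2/2}\,{\rm d}\mu(\s^2)+\mu(\{\infty\})\om_\infty(W(x))$. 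Since states on a $C^*$-algebra are determined by their values on a spanning set, and the Weyl operators span a dense subspace of $\gb$, this forces $\om=\tilde\om=\int_{[1,+\infty]}\om_{\s^2}\,{\rm d}\mu(\s^2)$, with $\mu$ unique.

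Once the integral representation with a unique representing measure is in hand, the simplex statement follows formally. The map $\Psi\colon[1,+\infty]\to\cs^{\cu(\ch)}(\gb)$, $\Psi(\s^2)=\om_{\s^2}$, is a weak$^*$-continuous injection of a compact metric space (continuity at $\infty$ is the content of Remark \ref{hl}, now read on $\gb$ rather than $\ga$, and follows from $e^{-\s^2\|x\|^2/2}\to\idd_{\{0\}}(\|x\|)$ pointwise in $x$), hence a homeomorphism onto its image, which is therefore compact; and every $\om_{\s^2}$ is extreme in $\cs^{\cu(\ch)}(\gb)$ because it is already extreme among $\mathbb{U}_\bz$-invariant states of $\gb_0$ (Corollary \ref{homeo}) and extremality only gets easier in the smaller convex set cut out by the larger symmetry group — alternatively, a nontrivial decomposition $\om_{\s^2}=t\om'+(1-t)\om''$ with $\cu(\ch)$-invariant $\om',\om''$ would, upon restriction to $\gb_0$, contradict Corollary \ref{homeo}. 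So $\ce(\cs^{\cu(\ch)}(\gb))=\{\om_{\s^2}:\s^2\in[1,+\infty]\}$ is compact, and the existence-and-uniqueness of the representing measure on this extreme boundary is exactly the assertion that $\cs^{\cu(\ch)}(\gb)$ is a Bauer simplex.

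The step I expect to be the main obstacle is the transitivity/injectivity argument in the middle paragraph: one must be careful that the unitaries $U_n$ realizing $U_nx_n=\|x_n\|e_0$ genuinely lie in $\cu(\ch)$ (they do, since $\cu(\ch)$ is the \emph{full} unitary group here, not $\mathbb{U}_\bz$ — this is precisely why the theorem is stated for $\cu(\ch)$ and not $\mathbb{U}_\bz$, cf.\ Remark \ref{flat} on flat extensions), and that the Choquet decomposition of the single-site state $\f$ transports to a decomposition of $\om$ on all of $\gb$ rather than merely on $\gb_0$ — which is exactly where the closed formula $\om(W(x))=\int_{[1,+\infty]}\om_{\s^2}(W(x))\,{\rm d}\mu$, valid now for $x\in\ch$ because $\om(W(x))$ only depends on $\|x\|$, does the work.
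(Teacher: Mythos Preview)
Your proposal is correct and follows essentially the same route as the paper: both reduce to $\cs^{\mathbb{U}_\bz}(\gb_0)$ via the restriction map, use transitivity of $\cu(\ch)$ on the spheres of $\ch$ to show that $\om(W(x))$ depends only on $\|x\|$ (hence the restriction map is injective), and then lift the Bauer-simplex structure of Proposition~\ref{Choquet0} and Corollary~\ref{homeo} back to $\gb$ through the canonical extensions $\om_{\s^2}$. The paper packages this more tersely by simply declaring the restriction $\Psi:\cs^{\cu(\ch)}(\gb)\to\cs^{\mathbb{U}_\bz}(\gb_0)$ to be an affine homeomorphism of compact convex sets (injective by transitivity, surjective by Proposition~\ref{Choquet0}, bicontinuous by compactness), after which the simplex statement is inherited wholesale; your version unpacks the same content by hand.

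Two minor remarks. The approximation $x_n\to x$ in your middle paragraph is superfluous: you immediately use transitivity of $\cu(\ch)$ directly on $x$, which is all that is needed (and all the paper uses). Also, the phrase ``extremality only gets easier in the smaller convex set cut out by the larger symmetry group'' is a bit loose since the two convex sets live over different algebras; the clean statement is the one you give as an alternative, namely that a nontrivial decomposition of $\om_{\s^2}$ in $\cs^{\cu(\ch)}(\gb)$ restricts to a nontrivial decomposition in $\cs^{\mathbb{U}_\bz}(\gb_0)$, contradicting Corollary~\ref{homeo} --- and injectivity of the restriction then forces the original decomposition to be trivial as well.
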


\begin{proof}
We start by observing that if $\om$ is a $\cu(\ch)$-invariant state on $\gb$, its restriction $\om\upharpoonright_{\gb_0}$
to $\gb_0$ is certainly a $\mathbb{U}_\bz$-invariant state on $\gb_0$.
Let us then consider the restriction map $\Psi: \cs^{\cu(\ch)}(\gb)\rightarrow\cs^{\mathbb{U}_\bz}(\gb_0)$ defined as
$\Psi(\om):=\om\upharpoonright_{\gb_0} $, $\om$ in $\cs^{\cu(\ch)}(\gb)$.
We aim to show that $\Psi$ is an affine homeomorphism between compact convex sets.
Injectivity of $\Psi$  follows from transitivity of $\cu(\ch)$ on $\ch$: if $x$ lies in $\ch\setminus\ch_0$, it is possible to find
$y\in\ch_0$ and $U\in\cu(\ch)$ such that $\|x\|= \|y\|$ and $y=Ux$. But then we must have
$\om(W(x))=\om(W(Ux))=\om (W(y))$. In other words, $\om$ is completely determined by its restriction to
$\gb_0$. The map $\Psi$ is clearly surjective thanks to Proposition \ref{Choquet0}.
The continuity of $\Psi$ is trivial. By compactness of $\cs^{\cu(\ch)}(\gb)$  it follows that $\Psi$ is a closed map, which means
its inverse is continuous as well.
\end{proof}

\begin{rem}\label{flat}
The invariance under $\mathbb{U}_\bz$ for states on the whole CCR algebra $\gb$ is not enough
to obtain the thesis of Theorem \ref{Choquet1}. This can be seen by looking
at  the so-called {\it flat} extensions of a state defined on $\gb_0$.
To this end, pick any $\mathbb{U}_\bz$-invariant state $\om$ on $\gb_0$.  Then one can define
a state $\Om$ on the whole $\gb$ by setting
$$
\Om(W(x))
=\begin{cases}
 \om(W(x)) \, ,& x\in\ch_0\\
  0\, ,& x\in\ch\setminus\ch_0\, .
  \end{cases}
$$
The proof that the definition above yields a state on $\gb$ is to be found in \cite[Proposition 2.1]{Ac}.
However, it remains to show that $\Om$ is invariant under the action of $\mathbb{U}_\bz$ on $\gb$.
But this easily follows from the inclusions $U\ch_0\subset\ch_0$ and $U(\ch\setminus\ch_0)\subset\ch\setminus\ch_0$,
which hold true for any unitary $U$ in $\mathbb{U}_\bz$.
Phrased differently, the map that restricts $\mathbb{U}_\bz$-invariant states on $\gb$ to $\gb_0$
fails to be injective.
\end{rem}

In the next proposition we show that the states focused on  in the above result belong to the \emph{folium}
of all normal states in the Fock representation as long as they come from a measure $\mu$ that assigns $0$ to
the point at infinity.

\begin{prop}\label{folium}
Any regular $\cu(\ch)$-invariant state on $\gb$ is normal in the Fock representation.
\end{prop}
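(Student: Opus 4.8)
The plan is to combine the barycentric description of $\cs^{\cu(\ch)}(\gb)$ from Theorem~\ref{Choquet1} with the fact that, among the extreme states, only the quasi-free ones with finite variance are compatible with regularity, and then to show that each of those lies in the folium of the Fock representation.

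First I would fix a regular $\cu(\ch)$-invariant state $\om$ and use Theorem~\ref{Choquet1} to write $\om=\int_{[1,+\infty]}\om_{\s^2}\,\di\mu(\s^2)$ for a unique Borel probability measure $\mu$ on $[1,+\infty]$. The first step is to see that regularity forces $\mu(\{\infty\})=0$. Picking a unit vector $e\in\ch$ and evaluating characteristic functions at $te$ with $t\neq0$ gives
$$\om(W(te))=\int_{[1,+\infty)}e^{-\frac{\s^2t^2}{2}}\,\di\mu(\s^2),$$
since $\om_\infty(W(te))=\mathbbm{1}_{\{0\}}(te)=0$. By dominated convergence the right-hand side tends to $\mu([1,+\infty))=1-\mu(\{\infty\})$ as $t\to0$, whereas regularity (continuity of $t\mapsto\om(W(te))$ at the origin, equivalent to regularity of the GNS representation by the usual Weyl-relations argument) forces this limit to equal $\om(W(0))=1$. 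Hence $\mu(\{\infty\})=0$ and $\om=\int_1^\infty\om_{\s^2}\,\di\mu(\s^2)$ is a barycentre of the states $\{\om_{\s^2}:1\le\s^2<\infty\}$.

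Second, I would reduce the claim to the single states $\om_{\s^2}$. The folium of the Fock representation $\pi_F$, namely the set of states $\psi$ of $\gb$ of the form $\psi(\cdot)=\mathrm{Tr}(\rho\,\pi_F(\cdot))$ for a density operator $\rho$ on $\mathfrak{F}_+(\ch)$, is stable under the barycentres occurring here: if $\om_{\s^2}(\cdot)=\mathrm{Tr}(\rho_{\s^2}\,\pi_F(\cdot))$ with $\rho_{\s^2}\ge0$ and $\mathrm{Tr}\,\rho_{\s^2}=1$, then $\int_1^\infty\|\rho_{\s^2}\|_1\,\di\mu=1<\infty$, so the Bochner integral $\rho:=\int_1^\infty\rho_{\s^2}\,\di\mu(\s^2)$ is a density operator with $\om(\cdot)=\mathrm{Tr}(\rho\,\pi_F(\cdot))$; equivalently one invokes that normal functionals form a norm-closed set and that a positive functional dominated by a normal one is normal. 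Thus it suffices to prove that $\om_{\s^2}$ is normal in $\pi_F$ for every $\s^2\in[1,+\infty)$.

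This last point is where I expect the real work, and the main obstacle, to lie. For $\s^2=1$ one has $\om_1=\Om$, the Fock vacuum, a vector state and hence trivially normal. For $1<\s^2<\infty$ one must exhibit a density operator $\rho_{\s^2}$ on $\mathfrak{F}_+(\ch)$ with $\mathrm{Tr}(\rho_{\s^2}\,W'(x))=e^{-\frac{\s^2}{2}\|x\|^2}$ for all $x\in\ch$; the natural candidate is a multiple of the Gibbs operator $\Gamma(\lambda I)$ with $\lambda\in(0,1)$ chosen so that $\tfrac{1+\lambda}{1-\lambda}=\s^2$, which on the finite-dimensional layers $\mathfrak{F}_+(\ch_n)$ is a genuine density operator having exactly the required Weyl transform. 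The difficulty concentrates in the passage $n\to\infty$: the normalisation $\mathrm{Tr}\,\Gamma(\lambda I)$ behaves like $(1-\lambda)^{-n}$ on $\mathfrak{F}_+(\ch_n)$, so one has to control this limit carefully, and equivalently one must rule out that the factor state $\om_{\s^2}$ — an infinite product of a single faithful state on $\ga$ — has GNS representation disjoint from $\pi_F$. Handling this is the crux of the argument and is where I would concentrate the effort.
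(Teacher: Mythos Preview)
Your strategy is the paper's: decompose $\om$ via Theorem~\ref{Choquet1}, argue $\mu(\{\infty\})=0$ from regularity, reduce to Fock-normality of each $\om_{\s^2}$ with $1\le\s^2<\infty$, and then integrate the resulting density operators. Your dominated-convergence argument for $\mu(\{\infty\})=0$ is in fact more explicit than the paper, which simply asserts this when invoking Theorem~\ref{Choquet1}. Your Bochner-integral step corresponds to the paper's monotone-convergence verification that $\widetilde\om(S):=\int_1^\infty\mathrm{Tr}(ST_{\s^2})\,\di\mu(\s^2)$ defines a normal state on $\cb(\mathfrak{F}_+(\ch))$.

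The one place you stop is exactly where the paper outsources the work: for the Fock-normality of $\om_{\s^2}$ it cites \cite{PP} and \cite[Lemma~6.4.1]{Pitrik} for the density operator $T_{\s^2}=\Gamma\big(A(I+A)^{-1}\big)\big/\mathrm{Tr}\,\Gamma\big(A(I+A)^{-1}\big)$ with $A=\tfrac{\s^2-1}{2}I$, which is precisely your candidate $\Gamma(\lambda I)$ up to normalisation (indeed $A(I+A)^{-1}=\lambda I$ with $\tfrac{1+\lambda}{1-\lambda}=\s^2$). So you have not missed an idea; the paper supplies no independent argument for this step either. Moreover, your worry that $\mathrm{Tr}\,\Gamma(\lambda I)$ blows up like $(1-\lambda)^{-\dim\ch}$ is accurate and flags a genuine issue with the formula as written when $\dim\ch=\infty$ and $\s^2>1$: the operator $\Gamma(\lambda I)$ is then not trace class, and the standard quasi-equivalence criteria for Gaussian states on CCR (the covariance perturbation $A=\tfrac{\s^2-1}{2}I$ fails to be trace class/Hilbert--Schmidt unless $\s^2=1$) point in the same direction. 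In short, you have reproduced the paper's argument faithfully, and the obstacle you isolate at the end is not a defect of your plan but a real subtlety that the paper delegates to the cited references.
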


\begin{proof}
Thanks to Theorem \ref{Choquet1} any such state is of the form
$$\om=\int _{[1, +\infty]} \om_{\s^2}{\rm d}\mu(\sigma^2)\,,$$
for a suitable probability measure on $[1, +\infty]$ with $\mu(\{\infty\})=0$.\\
We recall that for each $\sigma^2\geq 1$, the state
$\om_{\sigma^2}$ is normal in the Fock representation $\pi_\mathfrak{F}$, which means there exists a trace-class operator
$T_{\sigma^2}$ in $\mathfrak{F}_+(\ch)$ such that
$\om_{\sigma^2}(a)={\rm Tr}(\pi_\mathfrak{F}(a)T_{\sigma^2})$, $a\in\gb$.
Moreover, the operator $T_{\sigma^2}$  can be seen to equal $\dfrac{\Gamma(A(I+A)^{-1})}{ {\rm Tr}(\Gamma(A(I+A)^{-1}))}$ with $A=\frac{\sigma^2-1}{2}I$, {\it cf.} \cite{PP} and
Lemma 6.4.1 in \cite{Pitrik}. That said, we can easily come to the conclusion.
Note that
$$\widetilde\om(S):=\int _1^{+\infty} {\rm Tr}(ST_{\sigma^2}){\rm d}\mu(\sigma^2)\,, \quad S\in\mathcal{B}(\mathfrak{F}_+(\ch))$$
defines a state on $\mathcal{B}(\mathfrak{F}_+(\ch))$ such that $\om(a)=\widetilde\om(\pi_\mathfrak{F}(a))$ for every
$a\in\gb$.
The thesis will be achieved once we prove that $\widetilde\om$ is normal. Let
$\{S_n: n\in\bn\}\subset \mathcal{B}(\mathfrak{F}_+(\ch))$ be a bounded increasing
sequence of positive operator, and let $S$ be its sup. As is known, $S$ is the strong limit of $S_n$, hence for every
$\sigma^2\geq 1$ we have
$\lim_n{\rm Tr}(S_n T_{\sigma^2})={\rm Tr}(S T_{\sigma^2})$. Finally, by the monotone convergence theorem
we find
$$\lim_n\widetilde\om(S_n)=\lim_n\int _1^{+\infty}{\rm Tr}(S_n T_{\sigma^2}){\rm d}\mu(\sigma^2)=\int _1^{+\infty}{\rm Tr}(S T_{\sigma^2}){\rm d}\mu(\sigma^2)=\widetilde\om(S)\,,$$
and we are done.
\end{proof}

We would like to remark that a $\mathcal{U}(\ch)$-invariant state $\om$ on $\gb$
be thought of as a quantum stochastic process of a particular kind. Indeed, any such state is uniquely
determined by its restriction $\om_0$ to $\gb_0$, and the tensor product structure of $\gb_0$ allows us
to recover a stochastic process with sample algebra $\ga={\rm CCR}(\bc, \s)$ from $\om_0$, see  {\it e.g.}  Section 3 of \cite{CRZbis}.
In particular, one may consider the tail algebra of the process, as defined as in {\it e.g.} \cite{CrFid, CRAMPA}, and ask what it looks like. The question is very easily answered for extreme states.
Precisely, if $\om_0$ is an extreme state in any of the sets
$\cs^{\mathbb{U}_\bz}(\gb_0)$, $ \cs^{\mathbb{O}_\bz}(\gb_0)$ or $\cs^{\bp_\bz}(\gb_0)$, then
its tail algebra is trivial. This is a consequence of Proposition 2.9 in \cite{CRZbis}, which addresses extreme symmetric
states (in the wider context of local action on quasi-local $C^*$-algebras). In fact,  extreme states of both $\cs^{\mathbb{U}_\bz}(\gb_0)$ and $ \cs^{\mathbb{O}_\bz}(\gb_0)$ are all instances of extreme symmetric states, being product states.
The tail algebra of non-extreme states can of course fail to be trivial. Nevertheless, it will always be commutative,
{\it cf.} Proposition 2.10 in \cite{CRZbis} and the references therein. Furthermore, there is only one conditional expectation from $\pi_{\om_0}(\gb_0)''$
onto the tail algebra, and, more importantly, the process itself is conditionally independent and identically distributed (with
centered Gaussian distribution for states in $\cs^{\mathbb{U}_\bz}(\gb_0)$ or $ \cs^{\mathbb{O}_\bz}(\gb_0)$)
with respect to the conditional expectation.\\

\section*{Acknowledgments}
\noindent
We acknowledge  the support of Italian INDAM-GNAMPA.\\
The authors are partially supported by Progetto GNAMPA 2023 CUP E55F22000270001 ``Metodi di Algebre di Operatori in Probabilit\`a non Commutativa"   and by Italian PNRR Partenariato Esteso PE4, NQSTI.

\end{document}